\newtheorem{ass}{Assumption}
\def \R{I\!\!R}
\begin{document}
\title[population growth]{On
population growth with catastrophes}
\author{Branda Goncalves \and Thierry  Huillet \and Eva L\"ocherbach}
\address{B. Goncalves and T. Huillet: Laboratoire de Physique Th\'{e}orique et Mod\'{e}lisation, CY Cergy Paris Universit\'{e}, CNRS UMR-8089, 2 avenue Adolphe-Chauvin, 95302 Cergy-Pontoise, FRANCE\\
E-mails: branda.goncalves@outlook.fr, Thierry.Huillet@cyu.fr}
\address{E. L\"ocherbach: SAMM, Statistique, Analyse et Mod\'elisation Multidisciplinaire, Universit\'e Paris 1 Panth\'eon-Sorbonne, EA 4543 et FR FP2M 2036 CNRS, France.\\
E-mail: eva.locherbach@univ-paris1.fr}
\maketitle

\begin{abstract}
In this paper we study a  particular class of Piecewise deterministic Markov processes (PDMP's) which are semi-stochastic catastrophe versions of deterministic population growth models. In between successive jumps the process follows a flow describing deterministic population growth. Moreover, at random jump times, governed by state-dependent rates, the size of the population shrinks by a random amount of its current size, an event possibly leading to instantaneous local (or total) extinction. A special  separable shrinkage transition kernel is investigated in more detail, including the case of total disasters. We discuss conditions under which such processes are recurrent (positive or null) or transient. To do so, we introduce a modified scale function which is  used to compute, when relevant, the law of the height of excursions and to decide if the process is recurrent or not. The question of the finiteness of the time to extinction is investigated together with the evaluation of the mean time to extinction when the last one is finite. Some information on the embedded jump chain of the PDMP is also required when dealing with the classification of states $0$ and $\infty $ that we exhibit.\\
\textbf{Keywords}: Deterministic population growth, catastrophe, PDMP, recurrence/transience, scale function, height and length of excursions, speed measure, expected time to extinction, classification of boundary states.\\
\textbf{AMS Classification:} 60J25, 60H10, 92A15.
\end{abstract}

\section{Introduction} 
In this paper we study population growth models subject to random catastrophes, designed to describe for instance the evolution of a disease, the growth of a market or of the capital of a company, .... 
In our model, catastrophes appear after unpredictable  random times. These random times are generalized exponentially distributed times having rate  
 $ \beta ( x) $ whenever the current size of the process is $x.$ When a catastrophe happens, the process shrinks by a random amount of its size,
an event possibly leading to instantaneous local extinction (extinction of the disease, collapse of the market, failure of the company, ...). The successive catastrophe events are the only jumps  of the system. In between these jumps,  the process follows a flow describing 
deterministic growth, given by
$$ x_t = x_0 + \int_0^t \alpha ( x_s ) ds ,$$
with a locally Lipschitz continuous drift function $ \alpha $ which is strictly positive on $ (0, \infty ). $  
This leads to a resulting strong Markov process $ X = (X_t)_{t \geq 0}. $  All features of its dynamic are gathered in its infinitesimal generator given for smooth test functions $u$ by 
\begin{equation}\label{eq:gener0}
 G u ( x) = \alpha ( x) u' ( x) + \beta(x) \int_0^x H (x, dy ) [ u(y) - u(x) ], \; x \geq 0, 
\end{equation} 
where $H( x, dy ) $ is the jump kernel giving the after-jump position $y \in [0, x ], $ provided the current size of the population before undergoing the catastrophe is $x.$

Such processes are a particular instance of piecewise deterministic Markov processes (PDMP's) as introduced in \cite{Davis}; 
models of a similar flavor were considered in \cite{BGR}, 
\cite{BGR2}, \cite{BRT}, \cite{HT}, \cite{HT2}, \cite{PTB} and \cite{T}, see
also \cite{Box}, \cite{GG}, \cite{GG2} and \cite{neuts}.

The process $(X_t)_t $ takes values in $[0, \infty ],$ and both boundaries $0$ and $ + \infty $ have to be added to the state space. Indeed, since $\alpha$ is only locally Lipschitz continuous,  the deterministic flow may reach $ +\infty $ in finite time. Moreover, $ \alpha ( 0 ) $ may equal $0,$ such that it is possible that the process gets stuck in $0.$ We therefore propose in a first step the classification of the two boundaries together with necessary and sufficient conditions ensuring that the process is of finite activity, that is non-explosive in the sense that the number of jumps per finite time interval is finite almost surely. 

The main part of our paper is devoted to the study of the return times to $0$ of the process. The question whether the process almost surely returns to $0$ and how long it takes to do so is of course of tremendous importance in any application. On the other hand, under the assumption that $0$ is reflecting, the return times to $0$ induce a basic regeneration scheme and therefore trivially imply the recurrence of the process. 

Recurrence of one-dimensional Markov processes which are regular such as diffusions is usually studied by means of the associated scale function and speed measure. For example, one-dimensional elliptic diffusions are known the be recurrent if and only if their associated scale function (that is, a function transforming the process into a local martingale) is a space-transform, that is, a bijection, see e.g. \cite{Has} Example 2 in Section 3.8. Once the scale function is explicitly known, exit probabilities of bounded intervals can be computed. Hitting time moments are also known, expressed in terms of the Green's function and on the speed measure. 

Contrarily to the case of one-dimensional elliptic diffusions, the class of PDMP's we study in this paper is very irregular. A very particular feature of our process is the following intrinsic asymmetry. The only way to go up is by deterministic continuous motion -- therefore exit times of intervals $ [0, b ] $ will always be hitting times of level $b $ - while the process does only go down by jumps -- exit times of intervals $ [a, \infty ] $ will always be jumps. Another difficulty comes from the fact that in general the process $(X_t)_t $ cannot possess other bijective scale functions $s(\cdot ) $ than the constant ones. Indeed, starting from $0$ and under the assumption that $0$ is reflecting, $ X_t > 0 $ almost surely for any $ t > 0, $ such that $ s (X_t) > s (0) $ as well -- which obviously contradicts the martingale property. We are however able to define a {\it modified scale function} of the process that does not transform the trajectory into a martingale but allows to completely characterize the recurrence of the process and to compute certain exit probabilities. This function exists in the situation when the jump kernel is separable, that is, $H(x, y ) := \int_{[0, y ]} H (x, dz) $ is of the form $ H(x, y ) = h(y)/h(x) $ for some positive, non-decreasing function,  for any $ 0 \le y \le x . $ 

In this separable case, our first main result, Theorem \ref{prop:scale}, gives an explicit formula for $p(x, b ) , $ the probability of hitting $0$ before hitting $b, $ starting from $ x \in ( 0, b),$ under suitable conditions on the coefficients of the system. Theorem \ref{prop:scale} also gives the explicit value of $ p( 0, b ) = \lim_{ x \to 0} p ( x, b ) . $ Unlike in the diffusion case, $ p(0, b ) $ does not equal $1$ but is related to the distribution function of the height of an excursion, that is, the maximal value of the process in between two successive visits to $0.$ More precisely, 
$$ p (x, b) = \frac{ s(b) - s(x) }{ s(b ) + \frac{1}{h(0)}} , $$
where $ s$ is the  {\it modified scale function} of the process, given by 
\begin{equation}\label{eq:sbis}
s(x) = \int_0^x \frac{\gamma \left( y\right) }{h\left(
y\right) }e^{\Gamma \left( y\right) }dy , \mbox{ with } \Gamma( x) = \int_0^x \frac{ \beta }{\alpha} ( y) dy.
\end{equation}

Although $s$ is not a true scale function of $X, $ the recurrence of $X$ is however equivalent to the fact that $s$ is a bijection, that is, $s( \infty ) = \infty .$ Therefore we recover the same characterization of recurrence as in the case of one-dimensional elliptic diffusions, at least if $0$ is reflecting and accessible and $ +\infty $ inaccessible.  This result is stated in Proposition \ref{cor:14}.

Our second main result, Theorem \ref{prop:u(0)}, gives then the expected length of an excursion out of $0 - $ that is, of the expected time it takes the process to come back to $0, $ starting from there. To obtain this result, we rely on the fundamental formula relating the invariant measure of a process to the expected occupation time of a given set in between successive visits to a recurrent state (here, $0$). This allows to recover the length of an excursion by means of the speed density $ \pi $ and the {\it expected local time in $ 0$} of the process during one excursion. 

%More precisely, putting $u(0) = \mathbf{E}( \tau_{0, 0} ), $ where $ \tau_{0, 0} = \inf \{ t > 0 : X_t = 0 \},$ we have 
% $$ \pi (0) = \frac{1}{u(0 ) } \lim_{\varepsilon \to 0 } \mathbf{E}_0 \frac{1}{\varepsilon} \left( | s \le \tau_{0,  0} : X_s \le \varepsilon | \right) , \tau_{0, 0} = \inf \{ t > 0 : X_t = 0 \},$$
%since 
%$$ \lim_{\varepsilon \to 0 } \frac{1}{\varepsilon} \mathbf{E}_0 \left( | s \le \tau_{0,  0} : X_s \le \varepsilon | \right)= \frac{1}{ \alpha ( 0 ) } $$
%(whenever $ \alpha ( 0 ) > 0 $). 

{\bf Organization of the paper.} In Section 2 we introduce our model and discuss some first properties, including the distribution of the first jump time, the classification of the boundary states $ 0 $ and $ \infty $ and a discussion of the non-explosion of the stochastic process in Proposition \ref{prop:explos} and Proposition \ref{prop:explos2}. Section 3 is devoted to the study of some basic regularity properties of the associated transition semigroup. In particular, we show that the ``noise" which is present in the random choices of the jump times regularizes in the sense that $ {\mathcal L} (X_t| X_0 = x ) $ is absolutely continuous with respect to the Lebesgue measure on $ [0, x_t (x) ), $ see Proposition \ref{prop:2}. We also provide an explicit formula for the speed measure and its density in \eqref{eq:speed}. Section 4 contains the main results of the paper related to the recurrence and the return times to $0.$ Finally, in Section 5 we present some simulation results.

\section{Model definition and first results}
We consider a piecewise deterministic Markov process $ X_t $ taking values in $ [0, \infty ], $ describing the random size of a population. The dynamic of the process is given by two main ingredients. 
Firstly, in between successive jumps, the size of the population grows in a deterministic way, described by a deterministic flow. Secondly, at some random jump  times, catastrophe events occur at which the current size of the 
population shrinks by a random amount. 

We start by discussing the deterministic growth part in between the successive jumps. 
\subsection{Deterministic population growth models}
The evolution of the population size in between successive jumps follows the dynamic $\overset{.}{x}_{t}=\alpha \left( x_{t}\right) $, $x_{0}=x \geq 0 .$
Throughout this paper, the drift function $ \alpha $ is supposed to be continuous on $\left[ 0,\infty \right) $ and 
positive on $\left( 0,\infty \right) . $ 
\subsubsection{Algebraic growth models}
With $\alpha _{1},$ $a>0$, consider the
growth dynamics 
\begin{equation}\label{PG1}
\overset{.}{x}_{t}=\alpha _{1}x_{t}^{a}\text{, }x_{0}=x,  
\end{equation}
for some growth field $\alpha \left( x\right) =\alpha _{1}x^{a}$.
Note that in this case $\alpha \left( x\right) $ is increasing with $x.$ 
Integrating when $a\neq 1$ (the non linear case), we get formally 
\begin{equation}
x_{t}\left( x\right) =\left( x^{1-a}+\alpha _{1}\left( 1-a\right) t\right)
^{1/\left( 1-a\right) }.  \label{PG1I}
\end{equation}
In principle, such growth models are considered for some positive initial
condition $x> 0 .$ Because we will deal in the sequel with catastrophic events
that can send the population to state $0$, it is also important to consider
such growth models when started at $x=0$. Either after
hitting state $0$, the population remains stuck to $0,$ and in this case $0$ is absorbing. 
Or the population can regenerate starting afresh from $0,$ and $0$ is reflecting.

Three cases arise:

\begin{itemize}
\item $0<a<1$: then $x\geq 0$ makes sense and in view of $1/\left(
1-a\right) >1$, the growth of $x_{t}$ is algebraic at rate larger than $1$. When $x=0$%
, the dynamics has two solutions, one $x_{t}\left( 0\right) \equiv 0$ for $%
t\geq 0$ and the other $x_{t}\left( 0\right) =\left( \alpha _{1}\left(
1-a\right) t\right) ^{1/\left( 1-a\right) }$ because the velocity field $%
\alpha \left( x\right) $ in (\ref{PG1}) with $\alpha \left( 0\right) =0$ is
not Lipschitz as $x$ gets close to $0$, having an infinite derivative. The
solution $x_{t}\left( 0\right) =\left( \alpha _{1}\left( 1-a\right) t\right)
^{1/\left( 1-a\right) }$ with $x=0$ reflects some spontaneous generation
phenomenon: following this path, the mass at time $t>0$ is not $0$, although
initially it was. Whenever the spontaneous generation phenomenon
holds, we shall say that state $0$ is reflecting. In what follows, without explicitly mentioning it, we shall always choose this second, maximal solution describing spontaneous generation of mass. 
\item  $a>1$: then $x>0$ only makes sense and  $%
x\left( t\right) $ reaches state $+\infty $ in finite time $I_\infty (x) =x^{1-a}/\left[
\alpha _{1}\left( a-1\right) \right] $. We get
\begin{equation*}
x_{t}\left( x\right) =x\left( 1-t/I_\infty \left( x\right) \right) ^{1/\left(
1-a\right) },
\end{equation*}
with algebraic singularity. Whenever a growth
process reaches state $ + \infty $ in finite time, we shall say that state $\infty $ is accessible. 
\item $a=1$: this is a simple special case not treated in (\ref{PG1I}),
strictly speaking. However, expanding the solution (\ref{PG1I}) in the
leading powers of $1-a$\ yields consistently: 
\begin{equation}
\begin{array}{c}
x_{t}\left( x\right) =e^{\log \left( x^{1-a}+\alpha _{1}(1-a)t\right)
/\left( 1-a\right) } \\ 
=e^{\log [x^{1-a}\left( 1+\alpha _{1}x^{a-1}(1-a)t\right) ]/\left(
1-a\right) }\sim xe^{(1/(1-a))\alpha _{1}x^{a-1}(1-a)t}\sim xe^{\alpha
_{1}t}.
\end{array}
\label{A1}
\end{equation}
Here $x\geq 0$ makes sense for (\ref{PG1}) with $x_{t}\left( x\right)
=xe^{\alpha _{1}t}$ for $t\geq 0$ if $x\geq 0$. This is the simple Malthus
growth model. 
\end{itemize}

\subsubsection{The role of $0$ and of $ +\infty $}
In general, $\alpha $ being positive on $\left( 0,\infty \right) ,$ we have
\begin{equation*}
\int_{x}^{x_{t}\left( x\right) }\frac{dy}{\alpha \left( y\right) }=t.
\end{equation*}
Notice that in particular $t^{\prime }>t\geq 0$ entails $x_{t^{\prime }}\left( x\right)
>x_{t}\left( x\right) $, provided $ x > 0 $ and $ x_{t' (x) }< \infty . $ 

If for $x>0,$ $I_{0}\left( x\right) :=\int_{0}^{x}\frac{dy}{\alpha \left(
y\right) }<\infty ,$ then we have
\begin{equation*}
x_{t}\left( x\right) =I_{0}^{-1}\left( I_{0}\left( x\right) +t\right).
\end{equation*}

If for $x>0,$ $I_{0}\left( x\right) =\infty $ and $I_{\infty }\left(
x\right) :=\int_{x}^{\infty }\frac{dy}{\alpha \left( y\right) }<\infty ,$ then 
\begin{equation*}
x_{t}\left( x\right) =I_{\infty }^{-1}\left( I_{\infty }\left( x\right)
-t\right).
\end{equation*}
Finally we have in all cases,
\begin{equation*}
x_{t}\left( x\right) =I^{-1}\left( I\left( x\right) +t\right) ,
\end{equation*}
where $I\left( x\right) =\int^{x}\frac{dy}{\alpha \left( y\right) }$ is an
indeterminate integral. This occurs for example when $\alpha \left( x\right)
=x^{a}e^{-bx}$ with $a>1$ and $b>0.$

Clearly, $I_{0}\left( x\right) $ is the time needed to
reach some state $x$ inside the domain $\left( 0,\infty \right) $ starting from $0,$ and  $I_{\infty }\left( x\right) $
 the time needed to reach $\infty $ starting from
some $x$  inside the domain. Thus 
\begin{eqnarray*}
I_{0}\left( x\right) &<&\infty \Longleftrightarrow \text{ state }0\text{ is
reflecting, }I_{\infty }\left( x\right) <\infty \Longleftrightarrow \text{
state }\infty \text{ is accessible,} \\
I_{0}\left( x\right) &=&\infty \Longleftrightarrow \text{ state }0\text{ is
absorbing, }I_{\infty }\left( x\right) =\infty \Longleftrightarrow \text{
state }\infty \text{ is inaccessible.}
\end{eqnarray*}

\subsection{Adding catastrophes}
We now consider the stochastic process $X_t$ that follows the deterministic flow with drift $ \alpha $ and jumps at position dependent rate $\beta  $ which is a continuous function on $ [ 0,\infty ), $ 
positive on $ ( 0,\infty ) .$ At the jump times, the size of the
population shrinks by a random amount $\Delta \left( X_{t-}\right) \in ( 0, X_{t-} ] $ of its
current size $X_{t-}$. Up to the next jump time, $X$ grows following the
deterministic dynamics started at $Y (X_{t-}) :=X_{t-}-\Delta
\left( X_{t-}\right) $. 

Let
\begin{equation*}
\mathbf{P}\left( X\leq y\mid X_{-}=x\right) =\mathbf{P}\left(  \Delta (x) \geq x-y\right)=  H\left( x,y\right)  , 0 \le y \le x , 
\end{equation*}
be the kernel $H$ which fixes the law of the jump amplitude. $H\left( x,y\right) $ is a
non-decreasing function of $y$ with $H\left( x,y\right) =1$ for all $y\geq
x. $ We shall also write
$$
H\left( x,dy\right)  =H\left( x,0\right) \delta _{0}+\overline{H}\left(
x,dy\right) ,  \; 
H\left( x,y\right)  =\int_{0}^{y}H\left( x,dy^{\prime }\right) =H\left(
x,0\right) +\overline{H}\left( x,y\right) ,
$$
with $\overline{H}\left( x,0\right) =0$, $\overline{H}\left( x,x\right)
=1-H\left( x,0\right) .$ If $H\left( x,0\right) >0, $ there
is a positive probability of disasters (instantaneous local extinction).

A special (separable) interesting case is when 
\begin{equation*}
H\left( x,y\right) \overset{*}{=}\frac{h\left( y\right) }{h\left( x\right) }=%
\frac{h\left( 0\right) }{h\left( x\right) }+\frac{h\left( y\right) -h\left(
0\right) }{h\left( x\right) }, 
\end{equation*}
for some positive non-decreasing right-continuous function $h.$

Our main concern will deal with this particular separable structure of $H.$ In this case,
necessarily $x\rightarrow H(x,y)$\ is non-increasing in $x$\ for all $y$\
(because $y\rightarrow H(x,y)$\ is non-decreasing in $y$\ for all $x$
entailing $h$\ non-decreasing).

\begin{example}
\begin{itemize}
\item
Examples for the separable case are:
\begin{itemize}
\item $h(x)=e^{x}$ in which case $H(x,0)=e^{-x}>0$ (instantaneous disaster can
occur with some positive probability)$.$ This it the continuous version of 
the truncated geometric model defined in \cite{neuts}.

Letting $Z>0$\ random, with cpdf $\overline{F}_{Z}\left( z\right) ={\bf P}%
\left( Z>z\right) $, $H\left( x,y\right) =\overline{F}_{Z}\left( x\right) /%
\overline{F}_{Z}\left( y\right) $ is also in this class, with $H\left(
x,0\right) =\overline{F}_{Z}\left( x\right) >0.$

\item $h(x)=x$ in which case $H(x,0)=0$ (no instantaneous disaster)$.$

In the latter two examples $H(\infty ,y)=0$ and there is no way to come down
from infinity.

\item Let $Z>0$\ random and proper, with pdf $\overline{F}_{Z}\left( z\right) =%
{\bf P}\left( Z>z\right) $. Suppose $H\left( x,y\right) =h(y)/h(x)$ with $%
h(x)=h\left( \infty \right) -\left( h\left( \infty \right) -h\left( 0\right)
\right) \overline{F}_{Z}\left( x\right) $, for some constants $\infty
>h\left( \infty \right) >h\left( 0\right) >0$. Then, $h(x)$ being bounded
above, $H(\infty ,y)=1-\frac{h\left( \infty \right) -h\left( 0\right) }{%
h\left( \infty \right) }\overline{F}_{Z}\left( y\right) $ and there is a
possibility to come down from infinity. Note $H\left( x,0\right)
=h(0)/h(x)>0.$
\end{itemize}
\item
Examples for non separable kernels are:
\begin{itemize}
\item $H\left(
x,dy\right) =\delta _{ux} (dy) ,$ for some $u\in \left( 0,1\right) .$ After each catastrophe a fixed fraction $u$\ of
the previous population is kept. 

\item Let $U\in \left( 0,1\right) $\ random, with pdf $F_{U}\left( u\right) =%
{\bf P}\left( U\leq u\right) .$\ Define{\bf \ }$H\left( x,y\right)
=F_{U}\left( \frac{y}{x}\right) .${\bf \ }After each catastrophe a random
fraction $U$\ of the previous population is kept.
\end{itemize}
\end{itemize}
\end{example}

\subsection{Representing the process as solution of a stochastic differential equation driven by a Poisson random measure}
Introducing a Poisson random measure $M\left( dt,dz \right) $ on $\left[ 0,\infty
\right) \times \left[ 0,\infty \right)  $ with intensity $ dt dz ,$ we are thus led to consider the piecewise deterministic Markov process
(PDMP) $(X_t)_{t \geq 0} $ with state-space $\left[ 0,\infty \right] $ obeying
\begin{equation}
dX_{t}=\alpha \left( X_{t-} \right) dt - \Delta ( X_{t-}  )  
\int_{0}^{\infty } \mathbf{1}_{\left\{ z \leq \beta \left( X_{t-}  \right) \right\} }M\left( dt,dz  \right) ,
\end{equation}
$ X_0  = x \geq 0 .$ The associated infinitesimal generator is given for any smooth test function $u$ by 
\begin{equation}\label{eq:G}
 G u (x) = \alpha (x) u' ( x) + \beta ( x) \int_0^x [ u(y) - u(x) ]  H ( x, dy ) , x \geq 0.
\end{equation} 
In the separable case $ H(x,y ) = h(y)/h(x) $ for all $ 0 \le y \le x, $ this reads 
\begin{equation}\label{eq:Gsep}
G u (x) = \alpha (x) u' ( x) - \beta ( x)/h(x)  \int_0^x u'(y)h(y) dy   , x \geq 0.
\end{equation} 

Notice that $ t \to X_t $ is non-decreasing in between successive jumps such that the only possibility for the process to go down is by jumping. The underlying jump counting process  is 
\begin{equation}\label{eq:nt}
dN_{t}=\int_{0}^{\infty } 1_{\left\{ z\leq \beta \left( X_{t-}  \right) \right\} }M\left( dt,dz  \right) , \; \mbox{ with } \mathbf{E} \left( N_{t} \right) =\mathbf{E}\int_{0}^{t}\beta
\left( X_{s} \right) ds. 
\end{equation}

As usual, to emphasize the dependence on the starting position, we shall write $ \mathbf{P}_x$ and $ \mathbf{E}_x $ for the probability and its associated expectation on the event when $ X_0 = x.$ 
Defining 
\begin{equation}\label{eq:deftx}
T_{x}=\inf \{ t >  0 : X_t \neq X_{t-}  | X_0 = x  \}
\end{equation}
(with the convention that $ \inf \emptyset = \infty  $), $T_x$ is the time at which a first jump occurs, when the process starts from $x.$ In what follows, we shall write $ S_0 = 0 \le  S_1 \le  S_2 \le  \ldots \le  S_n $ for the successive jump times of the process $ X_t.$ Notice that $ S_1 = T_x, $ if $ X_0= x.$ Moreover, conditionally on $ X_{S_1} = x_1, $   $ S_2 - S_1 \stackrel{\mathcal L}{=} T_{ X_{x_1} } , $ etc. 

We shall also consider  
\begin{equation*}
\tau _{x,0}=\inf \left\{ t \,  >  \, 0:X_{t} =0 | X_0 = x \right\} , \, \inf \emptyset := +\infty , 
\end{equation*}
which is the first time to local extinction. We are led to the following distinctions:

1/ Total catastrophes (disasters): 
$$H\left( y,0\right) =1 \mbox{ for all $ y > 0,$} $$
which means that  $\mathbf{P}%
\left( X_{T_x }=0\mid X_{T_x -}=y\right) = \mathbf{P} ( \Delta ( y)  = y)  =1.$  

Given $x>0$, state $0$ is reached with probability $1,$ provided $ T_x < \infty $ almost surely.

- If $0$ is absorbing for $x_{t}$, then $X_{t}=0$ for all $t\geq T_{x}.$
Moreover $T_{x}$ coincides with the first time to extinction $\tau _{x,0}.$

- If $0$ is reflecting for $x_{t}$, $X_{t}$ possibly visits $0$ a finite or
an infinite number of times depending on weather $ T_x < \infty $ almost surely or not.

2/ Partial catastrophes (catastrophes without disasters): 
$$H\left(x,0\right) =0 \mbox{ for all }  x > 0 ,$$ 
which is equivalent to  $\mathbf{P} ( \Delta ( x) < x ) =1$ for all $x > 0 .$

Given $x>0$, state $0$ is never visited. The reflecting/absorbing status of
state $0$ is unimportant, being never reached. Formally, $\tau _{x,0}=\infty 
$. 

3/ General catastrophes: 
$$H\left( x,0\right) \in \left( 0,1\right) ,$$ 
which means that  $%
\mathbf{P} ( \Delta ( x) < x ) \in (0, 1 ) $ for all $ x > 0.$ Then $%
\mathbf{P}\left( X>0\mid X_{-}=x\right) =\mathbf{P} ( x - \Delta (x) > 0 ) = 1-H\left( x,0\right) \in \left(
0,1\right) .$

- If $0$ is absorbing for $x_{t}$, $X_{t}=0$ for all $t\geq \tau _{x,0},$ where $%
\tau _{x,0}$ is stochastically larger than $T_{x}.$

- If $0$ is reflecting for $x_{t}$, $X_{t}$ possibly visits $0$ a finite or
an infinite number of times.

\subsection{First jump distribution in case of $I_\infty (x) = \infty $}
In this subsection we suppose that $I_\infty (x) = \infty  $ such that the deterministic flow does not reach state $ + \infty $ in finite time. Supposing $ X_0 = x, $ since $ X_t = x_t (x) $ on $ t < T_x,$ we have
\begin{equation*}
\mathbf{P}\left( T_{x}>t\right) =\mathbf{P}_{x}\left( N_{t}=0\right) =%
\mathbf{P}_{x}\left( \int_{0}^{t}\int_{0}^{\infty } \mathbf{1}_{\left\{ z
\leq \beta \left( x_{s} (x) \right) \right\} }M\left( ds,d z \right) =0\right),
\end{equation*}
where $N_t $ was defined in (\ref{eq:nt}) above.
With $\gamma \left( x\right) :=\beta \left( x\right) /\alpha \left( x\right) 
$ and $\Gamma \left( x\right) :=\int^{x}\gamma \left( y\right) dy$, an
increasing function defined as an indefinite integral,  we get, since $ \alpha > 0 $ on 
$ (0, \infty ),  $
\begin{equation}\label{eq:tx0}
\mathbf{P}\left( T_{x}>t\right) =e^{-\int_{0}^{t}\beta \left( x_{s}\left(
x\right) \right) ds}=e^{-\left[ \Gamma \left( x_{t}\left( x\right) \right)
-\Gamma \left( x\right) \right] } , \mbox{ for all } t \geq 0. 
\end{equation}
This leads naturally to the introduction of the following two conditions. 
\begin{ass}\label{ass:gammainfty}
$\Gamma \left( \infty
\right) =\infty .$
\end{ass}

\begin{ass}\label{ass:gamma0}
$\Gamma \left( 0\right) >-\infty . $
\end{ass}

Notice that imposing Assumption \ref{ass:gammainfty} ensures $\mathbf{P}\left( T_{x}<\infty \right)
=1 .$ Indeed, since $ \alpha > 0 $ on $ (0, \infty ) , $ for any $ x > 0,$ $x_t ( x)  \to \infty $ as $ t \to \infty ,$ which, together with \eqref{eq:tx0} allows to conclude.  

Moreover, imposing Assumption \ref{ass:gamma0}  implies that for all $ t\geq 0,$ $\lim_{x\rightarrow 0}\mathbf{P}\left( T_{x}>t\right) >0$ (this is condition
2.6 in \cite{GG}). If $0 $ is reflecting, the definition of $ T_0$ in \eqref{eq:deftx} makes sense replacing $x$ by $0,$ and \eqref{eq:tx0} remains valid, since $ t \mapsto x_t (0) $ is invertible. In this case, Assumption \ref{ass:gamma0} is automatically satisfied. 

Under Assumption \ref{ass:gammainfty}, since $I_\infty ( x) = \infty $ by assumption, we obtain for $x > 0 $ 
\begin{equation}\label{eq:txesp}
\mathbf{E}\left( T_{x}\right) =\int_{0}^{\infty }e^{-\int_{x}^{x_{t}\left(
x\right) }\gamma \left( y\right) dy} dt =\int_{x}^{\infty }\frac{1}{\alpha
\left( z\right) }e^{-\int_{x}^{z}\gamma \left( y\right) dy} dz 
=e^{\Gamma \left(
x\right) }\int_{x}^{\infty }\frac{1}{\alpha \left( z\right) }e^{-\Gamma
\left( z\right) } dz  .
\end{equation}
Notice that the above expression is finite if we assume that  $ \beta $ is lower-bounded in a neighborhood of $ \infty ,$ say by a strictly positive constant $ c>0 .$ Then for $ x $ sufficiently large, 
$$
 \mathbf{E}\left( T_{x}\right)= e^{\Gamma \left(
x\right) }\int_{x}^{\infty }\frac{1}{\alpha \left( z\right) }e^{-\Gamma
\left( z\right) } dz=e^{\Gamma \left( x\right) }\int_{x}^{\infty }\frac{dz}{\beta \left( z\right) }\gamma \left( z\right) e^{-\Gamma \left(
z\right) } 
\leq \frac{1}{c}\int_x^{\infty}\gamma(z)e^{- \Gamma(z)}dz < \infty .
$$

\begin{example}
We take  $\alpha \left( x\right) =\alpha _{1}x^{a}$ with $a\leq 1$
 such that state $\infty $ is inaccessible. Moreover we choose $\beta \left( x\right) =\beta
_{1}x^{b}$ with $b>a-1,$ implying that $\gamma \left( x\right) =\gamma
_{1}x^{b-a}$\emph{, }$\Gamma \left( x\right) =\int_0^{x}\gamma \left( y\right)
dy=\frac{\gamma _{1}}{b-a+1}x^{b-a+1}, $ where we have chosen $\Gamma \left( 0\right) =0.$ Notice that 
$\Gamma \left( \infty \right) =\infty $ and 
\begin{multline*}
\Gamma \left(
x_{t}\left( x\right) \right) -\Gamma \left( x\right) =\frac{\gamma _{1}}{%
b-a+1}\left[ y^{b-a+1}\right] _{x}^{x_{t}\left( x\right) }=\frac{\gamma _{1}%
}{b-a+1}\left( x_{t}\left( x\right) ^{b-a+1}-x^{b-a+1}\right) \\
=\frac{\gamma
_{1}}{b-a+1}\left( \left( x^{1-a}+\alpha _{1}\left( 1-a\right) t\right)
^{\left( b-a+1\right) /\left( 1-a\right) }-x^{b-a+1}\right) .
\end{multline*}
In this case, $T_{x}$ has a shifted Weibull distribution, with mean 
\begin{equation*}
\mathbf{E}\left( T_{x}\right) =\frac{e^{\frac{\gamma _{1}}{b-a+1}x^{b-a+1}}}{%
\alpha _{1}\left( b-a+1\right) }\int_{x^{b-a+1}}^{\infty }u^{\frac{1-a}{b-a+1%
}-1}e^{-\frac{\gamma _{1}}{b-a+1}u}du<\infty .
\end{equation*}
\end{example}

%We finally present an example  where Assumption $2$ is not verified. 
%
%\begin{example}
%(Critical case). If $\alpha \left( x\right) =\alpha _{1}x^{a}$ with $a<1$ (state $\infty $ inaccessible and state $0$ reflecting) and
%$ \beta \left( x\right) =\beta _{1}x^{b}$\emph{\ }with\emph{\ }$b=a-1$\emph{,}
%then\emph{\ }$\gamma \left( x\right) =\gamma _{1}/x$\emph{, }$\Gamma \left(
%x\right) =\int^{x}\gamma \left( y\right) dy=\gamma _{1}\log x$ such that $ \Gamma \left( 0\right) =-\infty $ and $\Gamma \left( \infty \right) =\infty .$
%We have $\Gamma \left( x_{t}\left( x\right) \right) -\Gamma \left( x\right)
%=\gamma _{1}\log \left( \frac{x_{t}\left( x\right) }{x}\right) $ and 
%\begin{equation*}
%\mathbf{P}\left( T_{x}>t\right) =\left( \frac{x_{t}\left( x\right) }{x}%
%\right) ^{-\gamma _{1}}=\left( 1+\frac{\alpha _{1}\left( 1-a\right) t}{%
%x^{1-a}}\right) ^{-\gamma _{1}/\left( 1-a\right) },
%\end{equation*}
%following a Pareto distribution. In this case
%\begin{equation*}
%\mathbf{E}\left( T_{x}\right) =\frac{x^{\gamma _{1}}}{\alpha _{1}}%
%\int_{x}^{\infty }z^{-\left( a+\gamma _{1}\right) }dz
%\end{equation*}
%which is finite  if and only if $ \gamma _{1}>1-a.$ In this latter case, $\mathbf{E}\left( T_{x}\right) =\frac{1}{\alpha _{1}\left( \gamma
%_{1}+a-1\right) }x^{1-a}. $  Clearly, $\lim_{x \to 0} \mathbf{E}\left( T_{x}\right)  =  \mathbf{E}\left( T_{0}\right) = 0 ,$ since $ a < 1 , $ and this corresponds to  $ \Gamma (0) = - \infty .$ 
%\end{example}

\subsection{First jump time when $ +\infty $ is accessible}
If $x_{t}\left( x\right) $ reaches state $ + \infty $ in finite time $%
 I_\infty ( x) < \infty ,$ then we still have for all $ t \geq 0 $ the equality 
$$\mathbf{P}\left(
T_{x}>t\right) =e^{-\int_{0}^{t}\beta \left( x_{s}\left( x\right) \right)
ds} $$
which equals, for all $t < I_\infty ( x) , $ 
\begin{equation*}
 \mathbf{P}\left( T_{x}>t\right) =e^{-\int_{0}^{t}\beta \left( x_{s}\left(
x\right) \right) ds}
=e^{-\left[ \Gamma \left( x_{t}\left( x\right) \right) -\Gamma \left(
x\right) \right] } .
\end{equation*}
Letting $t\uparrow I_\infty ( x)  $ in the above equation, we get 
$$\mathbf{P}\left( T_{x}\geq I_\infty ( x)  \right) =e^{-\left[ \Gamma \left( \infty \right) -\Gamma \left(
x\right) \right] }$$ 
by monotone convergence, since $\Gamma $ is increasing, whence the necessary and sufficient condition 
\begin{equation}\label{eq:txexplosion}
\mathbf{P}\left( T_{x}\geq I_\infty ( x)  \right) =0\Longleftrightarrow
\Gamma \left( \infty \right) =\infty .
\end{equation}
Notice that under Assumption \ref{ass:gammainfty}, the representation \eqref{eq:txesp} remains valid for all $ x > 0 ,$ and also for $ x= 0 $ if $ 0$ is reflecting. Notice finally that $\mathbf{E}\left( T_{x}\right) < \infty $ since $T_x < I_\infty ( x)  $ almost surely.

\begin{example}
We consider  $\alpha \left( x\right) =\alpha _{1}x^{a}$ with $ a > 1 $ such that the solution $$x_{t}\left( x\right)
=\left( x^{1-a}+\alpha _{1}\left( 1-a\right) t\right) ^{1/\left( 1-a\right)
}$$ explodes in finite time at $I_\infty ( x)  =x^{1-a}/\left[
\alpha _{1}\left( a-1\right) \right] . $ Taking $\beta \left( x\right) =\beta
_{1}x^{b},$ we have for $ b \neq a-1,$ 
\begin{equation*}
\Gamma \left( x\right) =\frac{\gamma _{1}}{b-a+1}x^{b-a+1}\text{ such that  }\Gamma
\left( \infty \right) =\infty \Longleftrightarrow b>a-1.
\end{equation*}
If $ b>a-1$, then $ T_x < I_\infty ( x)  $ almost surely. 

If $0 < b<a-1,$ then  $\Gamma \left( \infty \right) =0 $ and $ \beta ( \infty ) = \infty, $ and $
T_{x}$ has an atom at $I_\infty ( x) $ with mass $e^{\frac{\gamma
_{1}x^{b-a+1}}{b-a+1}} .$ If $ b = 0, $ the process jumps at constant rate independently of its value (finite or infinite). Finally, if $ b < 0, $ then $\beta ( \infty ) = 0$ and $ T_x = + \infty $ with 
probability $e^{\frac{\gamma_{1}x^{b-a+1}}{b-a+1}} .$
\end{example}

\subsection{Joint distribution of $\left( T_{x},X_{T_{x}}\right) $}
Under the assumption $ I_\infty ( x) = + \infty ,$  we have for all $y\in \left[ 0,x_{t}\left( x\right) \right) ,$

\begin{eqnarray*}
\mathbf{P}\left( T_{x}\in dt,X_{T_{x}}\in dy\right) &=&dt\beta \left(
x_{t}\left( x\right) \right) e^{-\int_{0}^{t}\beta \left( x_{s}\left(
x\right) \right) ds}H\left( x_{t}\left( x\right) ,dy\right) \\
&=&dt\beta \left( x_{t}\left( x\right) \right) e^{-\int_{x}^{x_{t}\left(
x\right) }\gamma \left( z\right) dz}H\left( x_{t}\left( x\right) ,dy\right) .
\end{eqnarray*}
Moreover, 
$$
\mathbf{P}\left( T_{x}>\tau ,X_{T_{x}}\in dy\right) = e^{\Gamma \left( x\right) }\int_{x_{\tau }\left( x\right) }^{\infty
}dz\gamma \left( z\right) e^{-\Gamma \left( z\right) }H\left( z,dy\right) 
$$
and
%\begin{eqnarray*}
%\mathbf{E}\left( T_{x}\mathbf{1}\left( X_{T_{x}}\in dy\right) \right)
%&=&\int_{0}^{\infty }d\tau \int_{x_{\tau }\left( x\right) }^{\infty
%}dz\gamma \left( z\right) e^{-\left( \Gamma \left( z\right) -\Gamma \left(
%x\right) \right) }H\left( z,dy\right) \\
%&=&e^{\Gamma \left( x\right) }\int_{x}^{\infty }\frac{dz^{\prime }}{\alpha
%\left( z^{\prime }\right) }\int_{z^{\prime }}^{\infty }dz\gamma \left(
%z\right) e^{-\Gamma \left( z\right) }H\left( z,dy\right) ,
%\end{eqnarray*}
%such that
$$
\mathbf{P}\left( X_{T_{x}}\in dy\right) %&=&\int_{0}^{\infty }dt\beta \left(
%x_{t}\left( x\right) \right) e^{-\int_{x}^{x_{t}\left( x\right) }\gamma
%\left( z\right) dz}H\left( x_{t}\left( x\right) ,dy\right) \\
=e^{ \Gamma ( x) } \int_{x}^{\infty }dz\gamma \left( z\right) e^{-\Gamma ( z) } H\left( z,dy\right) .$$

We close this section with an important remark that we shall rely on later.

\begin{proposition}\label{prop:strongfeller}
Under Assumptions \ref{ass:gammainfty} and \ref{ass:gamma0}, the jump chain $ (Z_k)_k, $ given by $ Z_k = X_{S_k}$ is strong Feller.
\end{proposition}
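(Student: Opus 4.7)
The plan is to start from the explicit expression for the one-step kernel of the jump chain, derived at the end of Section 2.6:
\[
P(x, dy) := \mathbf{P}_x(X_{T_x} \in dy) = e^{\Gamma(x)} \int_x^\infty \gamma(z)\, e^{-\Gamma(z)}\, H(z, dy)\, dz.
\]
Under Assumption \ref{ass:gammainfty} we have $T_x < \infty$ almost surely for every $x \geq 0$, and the change of variable $z = x_t(x)$ with $dz = \alpha(z)\, dt$ shows that this formula is valid whether $I_\infty(x) = \infty$ or $I_\infty(x) < \infty$: in the second case $T_x < I_\infty(x)$ a.s. and the integration variable $z$ still ranges over $[x, \infty)$. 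Under Assumption \ref{ass:gamma0} the formula extends to $x = 0$ in the reflecting case, since then $\Gamma(0) > -\infty$.

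Given an arbitrary bounded Borel measurable $f$, I would set $g(z) := \int f(y)\, H(z, dy)$, which is bounded measurable with $\|g\|_\infty \leq \|f\|_\infty$, so that
\[
Pf(x) = e^{\Gamma(x)} \int_x^\infty \gamma(z)\, e^{-\Gamma(z)}\, g(z)\, dz.
\]
Continuity of $x \mapsto Pf(x)$ on $[0, \infty)$ then splits into two parts. The prefactor $e^{\Gamma(x)}$ is continuous in $x$: on $(0, \infty)$ because $\Gamma$ is the indefinite integral of the continuous function $\gamma = \beta/\alpha$, and at $0$ by Assumption \ref{ass:gamma0}. For the integral I would apply dominated convergence: if $x_n \to x$, the integrand $\mathbf{1}_{(x_n, \infty)}(z)\,\gamma(z)\, e^{-\Gamma(z)}\, g(z)$ converges a.e. to $\mathbf{1}_{(x, \infty)}(z)\,\gamma(z)\, e^{-\Gamma(z)}\, g(z)$ and is dominated by the integrable function $\|f\|_\infty\,\gamma(z)\, e^{-\Gamma(z)}$, since
\[
\int_0^\infty \gamma(z)\, e^{-\Gamma(z)}\, dz = e^{-\Gamma(0)} - e^{-\Gamma(\infty)} = e^{-\Gamma(0)} < \infty,
\]
using Assumptions \ref{ass:gamma0} and \ref{ass:gammainfty} combined.

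The key point is that no regularity is imposed on $f$ beyond boundedness and measurability: the smoothing is produced entirely by integration against the absolutely continuous measure $\gamma(z)\,e^{-\Gamma(z)}\, dz$ on $[x, \infty)$, which is what upgrades the Feller property to strong Feller. The only delicate step is verifying that the representation of $P(x, dy)$ is valid uniformly in $x \in [0, \infty)$, which is where Assumption \ref{ass:gamma0} is essential - both to make sense of $P(0, \cdot)$ and to ensure integrability of the dominating function up to the left boundary $z = 0$. Beyond this, the argument is routine and I do not expect any substantive obstacle.
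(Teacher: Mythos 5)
Your proposal is correct and follows exactly the paper's argument: the paper's proof writes $\mathbf{E}_x(g(Z_1)) = e^{\Gamma(x)}\int_x^\infty \gamma(z)e^{-\Gamma(z)}\int_0^z g(y)H(z,dy)\,dz$ for bounded measurable $g$ and invokes dominated convergence. You simply spell out the domination by the integrable function $\|f\|_\infty\,\gamma(z)e^{-\Gamma(z)}$ and the role of Assumptions 1 and 2, which the paper leaves implicit.
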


\begin{proof}
Let $g$ be any bounded and measurable function. Then $ x \mapsto \mathbf{E}_x ( g (Z_1) ) $ is continuous, since 
$$ \mathbf{E}_x ( g (Z_1) ) =  e^{ \Gamma ( x) } \int_{x}^{\infty }dz\gamma \left( z\right) e^{-\Gamma ( z) } \int_0^z  g(y) H(z, dy ),$$
which is continuous in $x$ by dominated convergence.
\end{proof}

\subsection{Classification of state $0$}
The classification of state $0$ is of utmost importance since return times to $0$ allow to decompose the trajectory of the process $(X_t)_t$ into independent excursions out of $0, $ implying recurrence of the process - under the condition that the process comes back to $0$ almost surely and is not stuck there. 

With $x>0$, state $0$ is non-absorbing or reflecting if and only if
\begin{equation*}
I_{0}\left( x\right) =\int_{0}^{x}\frac{dy}{\alpha \left( y\right) }<\infty .
\end{equation*}
If $I_{0}\left( x\right) =\infty , $ then
state $0$ is absorbing. 

$I_{0}\left( x\right) $ is the time necessary for $%
x_{t}$ to move from $0$ to $x>0$. In particular, if $I_{0}\left( x\right) <\infty ,$ then state $0$ is a reflecting boundary.  Moreover, if 
$I_{0}\left( x\right) =\infty ,$ then it is an absorbing boundary.

We can get IN from some $x\in \left( 0,\infty \right) $ to the boundary
point $0$ iff $H\left( x,0\right) >0$.

We can get OUT from the boundary point $0$ iff $I_{0}\left( x\right) <$ $%
\infty $ for some $x\in \left( 0,\infty \right) $.

This leads to four possible combinations for the boundary state $0$:
\begin{itemize}
\item
$H\left( x,0\right) >0$ and $I_{0}\left( x\right) <$ $\infty :$ regular
(accessible and reflecting).
\item
$H\left( x,0\right) >0$ and $I_{0}\left( x\right) =$ $\infty :$ exit
(accessible and absorbing).
\item
$H\left( x,0\right) =0$ and $I_{0}\left( x\right) <$ $\infty :$ entrance
(inaccessible and reflecting).
\item
$H\left( x,0\right) =0$ and $I_{0}\left( x\right) =$ $\infty :$ natural
(inaccessible and absorbing).
\end{itemize}
The first case is called regular because we can get in to $0$ and we can
start the process afresh from there. The second case is called exit because
we can get in to $0$ but cannot get out. The third is called an entrance
boundary because we cannot get in to $0$ but we can start the process there.
Finally, in the fourth case the process can neither get to nor start afresh
from $0$, so it is reasonable to exclude $0$ from the state space.

\subsection{Reaching state $\infty$ and explosion of the stochastic process}
As usual in the theory of jump processes, we say that the process possesses a finite explosion time $S_\infty $ if 
\begin{equation}\label{eq:explosion}
 \lim_{n \to \infty } S_n = S_\infty < \infty 
\end{equation} 
with positive probability, where $ S_1 < S_2 < \ldots$ is the sequence of successive jump times of the process. Explosion of the process $(X_t)_t$ therefore refers to the event that we 
observe an accumulation of an infinite number of jumps within a finite time interval. 

Clearly, $I_\infty ( x ) = \infty $ implies that the process does not explode in finite time. Indeed, the upper bound $ X_t \le x_t ( x) < \infty $ (if $ X_0 = x$) implies that the maximal jump rate of the process up to time $t$ is given by $ \max \{ \beta( x_s ( x) : s \le t \} $ which is finite by continuity of $ \beta .$ The following proposition discusses the case $ I_\infty ( x) < \infty.$ 

\begin{proposition}\label{prop:explos}
Suppose that $ \Gamma ( \infty ) = \infty $ and $I_{\infty }\left( x\right) < \infty $ for some (and hence all)  $x > 0.$ 
Let $ T_\infty ( x) = \inf \{ t > 0 : X_{t-}  = \infty |X_0 = x \} .$ Then 
$$ {\mathbf P} ( T_\infty ( x) < S_\infty  ) = 0.$$ 
\end{proposition}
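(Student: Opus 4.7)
The plan is to decompose the event $\{T_\infty(x) < S_\infty\}$ according to the last jump time preceding $T_\infty(x)$ and apply the strong Markov property together with the explosion criterion already established in \eqref{eq:txexplosion}.

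First I would write
\[
 \{ T_\infty ( x) < S_\infty \} \subseteq \bigcup_{n \geq 0} \{ S_n \leq T_\infty ( x) < S_{n+1} \}.
\]
On the event $\{S_n \leq T_\infty(x) < S_{n+1}\}$ the trajectory $(X_t)_{t \in [S_n, S_{n+1})}$ coincides with the deterministic flow started from $X_{S_n}$, so that $T_\infty(x) - S_n$ must equal $I_\infty(X_{S_n})$, and the inequality $T_\infty(x) < S_{n+1}$ forces $S_{n+1} - S_n > I_\infty(X_{S_n})$. Since jumps only decrease the position, we have $X_{S_n} < \infty$ on this event.

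Next I would invoke the strong Markov property at $S_n$: conditionally on $\mathcal{F}_{S_n}$, the law of the inter-jump time $S_{n+1} - S_n$ coincides with that of $T_{X_{S_n}}$, and by \eqref{eq:txexplosion} applied at the (random, almost surely finite) starting point $X_{S_n}$,
\[
 \mathbf{P}\bigl( S_{n+1} - S_n \geq I_\infty ( X_{S_n} ) \,\big|\, \mathcal{F}_{S_n} \bigr) = e^{ - [ \Gamma ( \infty ) - \Gamma ( X_{S_n} ) ] } \mathbf{1}_{ \{ X_{S_n} < \infty \} } = 0
\]
under Assumption \ref{ass:gammainfty}. Taking expectations gives $\mathbf{P}(S_n \leq T_\infty(x) < S_{n+1}) = 0$, and countable subadditivity concludes the proof.

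The only subtle point, which I would address explicitly, is that the decomposition is exhaustive: if $T_\infty(x) < S_\infty$, then $T_\infty(x)$ lies strictly below $\lim_n S_n$, hence below $S_{n+1}$ for some $n$, and by definition of the jump times $T_\infty(x) \geq S_n$ for the largest such $n$. Aside from this bookkeeping, the argument is essentially a one-line application of \eqref{eq:txexplosion} at each jump time, so I do not anticipate serious difficulties.
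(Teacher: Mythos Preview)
Your proposal is correct and follows essentially the same approach as the paper: both arguments isolate the last jump $S_n$ preceding $T_\infty(x)$, observe that on the corresponding event the inter-jump time $S_{n+1}-S_n$ would have to exceed $I_\infty(X_{S_n})$, and then invoke \eqref{eq:txexplosion} to conclude this has probability zero. The only difference is cosmetic: the paper phrases this as a proof by contradiction (defining $S_T=\sup\{S_n:S_n<T\}$ directly), whereas you decompose into a countable union indexed by $n$ and apply the strong Markov property at each $S_n$.
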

The above result implies that the process is not able to reach the state $ +\infty $ before  the time of explosion $ S_\infty . $ 

\begin{proof}
Suppose that $ T_\infty ( x) <  S_\infty  $  with positive probability and write $ T = T_\infty ( x) .$ Let $ S_T = \sup \{ S_n : S_n < T \} $ be the last jump of the process strictly before hitting the state  $ +\infty .$   
$ T < S_\infty $ implies that there is only a finite number of jumps on $ [0, T] , $ such that, almost surely, $ S_T < T $ and $ X_{S_T} < \infty . $ Moreover, conditionally on $  X_{S_T}  = y < \infty , $ 
$$ X_{S_T+ t }  = x_t (y ) , \mbox{  for all } \;  t < T - S_T \mbox{ and }  T - S_T \stackrel{d}{=} I_\infty (y) .$$ 
In particular, $ X$ does not jump in $ ( S_T, T ) .$ However, since $ \Gamma ( \infty ) = \infty , $  by \eqref{eq:txexplosion}, almost surely, 
$$ T_y < I_\infty ( y ) , $$
implying that $ X$ does indeed jump strictly before time $T, $ which is a contradiction. 
\end{proof}

The above arguments show that on the event of explosion $\{ S_\infty < \infty \},$ the process approaches state $ \infty $ in finite time, that is, 
on $\{ S_\infty < \infty \}, $ we have that $\lim_{n \to \infty } X_{S_n} = \infty $ almost surely. This also follows from the following result which extends the classical explosion criterion for pure Markov jump processes without drift (see e.g. \cite{KK}) to the present frame of PDMP's. 

\begin{proposition}\label{prop:explos2}
Grant Assumptions \ref{ass:gammainfty} and \ref{ass:gamma0} and suppose moreover that $ I_0 ( x) < \infty .$ Then almost surely
$$ \left(S_\infty < \infty \right)  \; \Longleftrightarrow   \left( \sum_n  e^{\Gamma ( X_{S_n})} \int_{ X_{S_n} }^\infty \frac{1}{\alpha (z)}  e^{ - \Gamma (z) } dz  < \infty  \right).  $$ 
\end{proposition}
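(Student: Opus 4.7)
Write $\tau_n := S_{n+1} - S_n$ for the inter-jump times and $Z_n := X_{S_n}$ for the jump chain, and let $\mathcal{G} := \sigma(Z_n : n \geq 0)$. The strategy is to condition on $\mathcal{G}$. By the strong Markov property applied successively at each $S_n$, together with the distributional identity \eqref{eq:tx0}, the variables $(\tau_n)$ are conditionally independent given $\mathcal{G}$ with $\mathbf{P}(\tau_n > t \mid \mathcal{G}) = e^{-[\Gamma(x_t(Z_n)) - \Gamma(Z_n)]}$. In particular, by \eqref{eq:txesp}, their conditional means are
\[
\mathbf{E}(\tau_n \mid \mathcal{G}) = m(Z_n), \qquad m(y) := e^{\Gamma(y)} \int_y^\infty \frac{1}{\alpha(z)} e^{-\Gamma(z)} \, dz .
\]
Since $S_\infty = \sum_n \tau_n$, the claim reduces to showing that, almost surely, $\sum_n \tau_n < \infty$ if and only if $\sum_n m(Z_n) < \infty$.

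\textbf{Easy direction.} If $\sum_n m(Z_n) < \infty$ on some event, a conditional application of Fubini's theorem gives $\mathbf{E}(S_\infty \mid \mathcal{G}) = \sum_n m(Z_n) < \infty$ almost surely, hence $S_\infty < \infty$ almost surely on the event $\{\sum_n m(Z_n) < \infty\}$.

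\textbf{Converse.} For the opposite implication I would work with the conditional Laplace transform. By conditional independence,
\[
\mathbf{E}(e^{-S_\infty} \mid \mathcal{G}) = \prod_n L(Z_n), \qquad L(y) := \mathbf{E}(e^{-T_y}),
\]
so $\{S_\infty < \infty\}$ agrees (modulo a conditional null set) with $\{\sum_n (1 - L(Z_n)) < \infty\}$. Using the elementary bounds $(1-e^{-1})(t \wedge 1) \leq 1 - e^{-t} \leq t$, one gets
\[
(1-e^{-1}) \, \mathbf{E}(T_y \wedge 1) \leq 1 - L(y) \leq m(y),
\]
so the task becomes to show $\sum_n \mathbf{E}(T_{Z_n} \wedge 1) < \infty \Leftrightarrow \sum_n m(Z_n) < \infty$. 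To that end, the Markov property applied at the deterministic time $1$ gives $\mathbf{P}(T_y > 1+s) = \mathbf{P}(T_y > 1) \, \mathbf{P}(T_{x_1(y)} > s)$, yielding the decomposition $m(y) = \mathbf{E}(T_y \wedge 1) + \mathbf{P}(T_y > 1) \, m(x_1(y))$. I would then split into two cases. If $(Z_n)$ remains in some compact set $[0,K]$ infinitely often, continuity and positivity of $m$ bound $m(Z_n)$ away from $0$ there, and the same holds for $\mathbf{E}(T_{Z_n} \wedge 1)$, so a conditional second Borel--Cantelli argument forces both sums to diverge. If $Z_n \to \infty$---the only relevant case for explosion in view of Proposition \ref{prop:explos}---one argues that $\mathbf{P}(T_{Z_n} > 1) \to 0$ (using Assumption \ref{ass:gammainfty}) so that the remainder $\mathbf{P}(T_{Z_n} > 1) \, m(x_1(Z_n))$ in the decomposition is a strictly smaller fraction of $m(Z_n)$, giving $\mathbf{E}(T_{Z_n} \wedge 1) \asymp m(Z_n)$ along the chain.

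\textbf{Main obstacle.} The delicate point is the quantitative control of $\mathbf{P}(T_y > 1) \cdot m(x_1(y))$ relative to $m(y)$ in the regime $Z_n \to \infty$. This step uses more than the mean of $T_y$: it requires exploiting the specific exponential form of the survival $e^{-[\Gamma(x_t(y)) - \Gamma(y)]}$, for instance through the fact that under \eqref{ass:gammainfty} one has $\Gamma(x_1(y)) - \Gamma(y) \to \infty$ as $y \to \infty$, which forces the short-time integral $\int_0^1 e^{-[\Gamma(x_t(y)) - \Gamma(y)]} dt$ to dominate its tail, so that the two summability conditions indeed coincide.
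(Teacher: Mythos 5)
There is a genuine gap at the foundation of your argument: the conditional law of the inter-jump times given the whole embedded chain $\mathcal{G}=\sigma(Z_n:n\ge 0)$ is not what you claim. The pair $(\tau_n, Z_{n+1})$ is not independent: the landing distribution is $H(x_{\tau_n}(Z_n),\cdot)$, which depends on $\tau_n$, so conditioning on $Z_{n+1}$ biases $\tau_n$. (Extreme case: for $H(x,dy)=\delta_{ux}(dy)$ the value of $Z_{n+1}$ determines $\tau_n$ exactly.) Hence $\mathbf{P}(\tau_n>t\mid\mathcal{G})$ is the law of $T_{Z_n}$ conditioned on its landing site, not $e^{-[\Gamma(x_t(Z_n))-\Gamma(Z_n)]}$, and $\mathbf{E}(\tau_n\mid\mathcal{G})\neq m(Z_n)$ in general. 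This invalidates both your ``easy direction'' (the identity $\mathbf{E}(S_\infty\mid\mathcal{G})=\sum_n m(Z_n)$ fails) and the conditional Laplace-transform/three-series step of the converse. What is true is $\mathbf{E}(\tau_n\mid\mathcal{F}_{S_n})=m(Z_n)$ for the natural filtration, and that is the identity one should exploit: the paper works with the compensator $A_n=\sum_{k\le n}m(Z_{k-1})$ of $S_n$ and deduces $\{A_\infty<\infty\}\subset\{S_\infty<\infty\}$ by stopping the martingale $M_n=S_n-A_n$ when $A$ first exceeds a level $a$ (so that $M^-$ is bounded) and applying martingale convergence -- no conditioning on the future of the chain is needed.

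A second, self-acknowledged gap is your ``main obstacle'' in the converse direction, and it is precisely where the hypothesis $I_0(x)<\infty$ -- which you never invoke -- enters. Under Assumption \ref{ass:gammainfty} one has $T_y<I_\infty(y)$ a.s., and since $0$ is reflecting, $I_\infty(y)\le \int_0^\infty dz/\alpha(z)<\infty$ uniformly in $y$ once $I_\infty<\infty$ (which must hold if explosion occurs). This uniform bound on the increments $\tau_n$ makes the truncation at level $1$ unnecessary: stopping $M_n$ when $S_n$ first exceeds $a$ gives $\sup_n\mathbf{E}(M^+_{n\wedge\sigma_a})<\infty$ directly and yields $\{S_\infty<\infty\}\subset\{A_\infty<\infty\}$. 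Your intended comparison $\mathbf{E}(T_y\wedge 1)\asymp m(y)$ is not established as written and is not needed once this uniform bound is available. I recommend reworking the proof along the compensator/martingale lines (Kersting--Klebaner) rather than conditioning on $\mathcal{G}$.
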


\begin{proof}
Let us write for short 
\begin{equation}\label{eq:e}
e(x) := \mathbf{E}\left( T_{x}\right)  = e^{\Gamma (x) } \int_x^\infty \frac{1}{\alpha (z)}  e^{ - \Gamma (z) } dz   .
\end{equation}
Then the process 
$$ A_n =  \sum_{k=1}^n \mathbf{E} \left( S_k - S_{k-1}  | {\mathcal F}_{S_{k-1}} \right) =\sum_{k=1}^n  e ( X_{S_{k-1}} ) $$
is the predictable increasing compensator of $S_n ,$ that is, $ M_n := S_n - A_n$ is a martingale. Putting $ \tau_a := \inf \{ n : A_{n+1} > a \} $ it follows that $ M^-_{n \wedge \tau_a } \le a , $ and the martingale convergence theorem implies that 
$ \{ A_\infty < \infty \} \subset \{ S_\infty < \infty \}$ almost surely. To prove the opposite inclusion, suppose $ S_\infty < \infty $ with positive probability. Then necessarily $ I_\infty ( x) < \infty .$ In particular, recalling \eqref{eq:txexplosion},
$$ \sup_n (S_n - S_{n-1} ) \le \sup_n I_\infty ( X_{S_{n-1}} ) \le \int_0^\infty \frac{1}{\alpha (y ) } dy < \infty $$ 
since $ 0 $ is reflecting by assumption and since $ I_\infty (x) < \infty .$  Introducing  the stopping time $ \sigma_a := \inf \{ n : S_n > a \} ,$ it follows from the above that $ \sup_n \mathbf{E} (  M^+_{n \wedge \sigma_a } ) < \infty. $ Classical arguments then allow to conclude that $ \{ S_\infty < \infty \} \subset \{ A_\infty < \infty \} $ almost surely. 
\end{proof}

In what follows, we give conditions ensuring that the process reaches state $ +\infty $ starting from any point $x\in(0,\infty).$ We also exhibit conditions implying that the process comes down from infinity to $y\in(0,\infty)$.

We can get IN from some $x\in \left( 0,\infty \right) $ to the boundary
point $\infty $ iff $\Gamma ( \infty ) <\infty $ and $ I_\infty (x) < \infty.$ 

We can get OUT from the boundary point $\infty $ iff $H\left( \infty
,y\right) >0$ for some $y\in \left( 0,\infty \right) .$ 

This leads to the following relevant combinations for the boundary state $\infty .$ To classify them, we introduce $\Sigma=\sum_{n\geq 1} e (X_{S_n}), $ where $X_{S_n} $ is the embedded chain of $X. $ Then we have:

$\Sigma  <\infty $ and $H\left( \infty ,y\right) >0:$ regular (accessible and
reflecting).

$\Sigma < \infty  $ and $H\left( \infty ,y\right) =0:$ exit (accessible and
absorbing).

%$ \Sigma =\infty $ and $H\left( \infty ,y\right) >0:$ entrance (inaccessible
%and reflecting).
%
%$ \Sigma  =\infty $ and $H\left( \infty ,y\right) =0:$ natural (inaccessible
%and absorbing).

\section{Regularity of the transition operator and speed measure}
We describe the infinitesimal generators of the process $X.$

\textbf{Backward:} With $u_{t}\left( x\right) :=\mathbf{E}_{x}u\left( X_{t}\right) $, $%
u_{0}\left( x\right) =u\left( x\right) $, we have (Kolmogorov backward
equation) 
\begin{equation*}
\partial _{t}u_{t}\left( x\right) =\left( Gu_{t}\right) \left( x\right) ,
\end{equation*}
where $G$ is given in \eqref{eq:G}.

\textbf{Forward:} With $\Pi _{t,x}\left( dy\right) =\mathbf{P}_{x}\left(
X_{t}\in dy\right) $, $\Pi _{0,x}\left( dy\right) =\delta _{x}$, this also
means 
\begin{equation*}
\frac{d}{dt} \int_{0}^{\infty }u\left( y\right) \Pi _{t,x}\left( dy\right)
=\int_{0}^{\infty }\left( Gu\right) \left( y\right) \Pi _{t,x}\left(
dy\right) .
\end{equation*}
Notice that the measure $\Pi _{t,x}\left( dy\right) $ has support $%
\left[ 0,x_{t}\left( x\right) \right] $ with an atom at $x_{t}\left(
x\right) $ with mass $\mathbf{P}\left( T_{x}>t\right) . $ 
Considering the family of test functions $u\left( y\right) =e_{\lambda
}\left( y\right) :=e^{-\lambda y}$, $\lambda \geq 0$, for which 
\begin{equation*}
\left( Ge_{\lambda }\right) \left( x\right) =-\lambda \alpha \left( x\right)
e_{\lambda }\left( x\right)  +\lambda \beta \left( x\right)
\int_{0}^{x} {H}\left( x,y\right) e_{\lambda }\left( y\right) dy,
\end{equation*}
we get, using Fubini's theorem and putting $\Pi _{t,x}\left( y\right)
=\int_{0}^{y}\Pi _{t,x}\left( dz\right) ,$ 
\begin{multline}\label{eq:prel}
\frac{d}{dt} \int_{0}^{\infty }dye_{\lambda }\left(
y\right) \Pi _{t,x}\left( y\right)= \frac{d}{dt}\frac{1}{\lambda }\int_{0}^{\infty }e_{\lambda }\left( y\right) \Pi _{t,x}\left( dy\right) \\
=-\int_{0}^{\infty }e_{\lambda }\left( y\right) \alpha \left( y\right) \Pi
_{t,x}\left( dy\right)  +\int_{0}^{\infty }dye_{\lambda }\left( y\right) \int_{y}^{\infty
}\beta \left( z\right) {H}\left( z,y\right) \Pi _{t,x}\left(
dz\right).
\end{multline}

Writing $ {\mathcal D}_+' ( \R)$ for all distributions having support in $[0, \infty ) , $  we define the distribution $ \delta_t \Pi_{t, x } $ by 
$$ < \delta_t \Pi_{t, x }  , u > := \frac{d}{dt} \int u ( y) \Pi_{t, x} (y) dy $$
for any smooth test function $u$ having compact support. Notice that $\delta_{t}\Pi _{t,x} $ is of compact support.

Therefore, Laplace transforms characterizing distributions with compact support in $ \R_+,
$ by
duality (Kolmogorov forward equation) 
\begin{equation}\label{eq:forward}
\delta _{t}\Pi _{t,x} =-\alpha \left( y\right) \Pi
_{t,x}\left( dy\right) +dy\int_{y}^{\infty }\beta \left( z\right) H\left(
z,y\right) \Pi _{t,x}\left( dz\right) .
\end{equation}

\begin{proposition}\label{prop:2}
Suppose either that $ \alpha $ is strictly positive on $ [0, \infty )$ or, in case that $\alpha (0) = 0 ,$ either that $ I_0(x) < \infty  $ or that $ H(x, 0 ) = 0 $ for all $ x > 0.$ Then for all $ x > 0,$  $\Pi_{t, x } $ is absolutely continuous on $ [0, x_t( x) ) .$  
\end{proposition}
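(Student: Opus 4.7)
My strategy is to decompose $X_t$ according to the number $N_t$ of jumps in $[0,t]$, writing
$$
\Pi_{t,x}(A) = \sum_{n \geq 0} \Pi_{t,x}^{(n)}(A), \quad \Pi_{t,x}^{(n)}(A) := \mathbf{P}_x(X_t \in A, N_t = n),
$$
and to show that for every $n \geq 1$ the measure $\Pi_{t,x}^{(n)}$ is absolutely continuous on $[0,x_t(x))$; the $n=0$ term equals $\mathbf{P}(T_x > t)\delta_{x_t(x)}$, which is precisely the atom already identified. I would proceed by induction on $n$, with the base case $n=1$ being the essential one.

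For $n=1$, applying the strong Markov property at $S_1 = T_x$ and using the joint law of $(T_x, X_{T_x})$ computed earlier yields
$$
\Pi_{t,x}^{(1)}(A) = \int_0^t \beta(x_s(x)) e^{-\int_0^s \beta(x_u(x))du} \int_0^{x_s(x)} \mathbf{1}_A(x_{t-s}(y)) e^{-\int_0^{t-s}\beta(x_u(y))du} H(x_s(x),dy) ds.
$$
I would split $H(x_s(x),dy) = H(x_s(x),0)\delta_0(dy) + \bar H(x_s(x),dy)$. The key change of variables, for each fixed $y \geq 0$, is $w := x_{t-s}(y)$: this is a $C^1$ diffeomorphism from its domain in $s$ onto an interval in $w$, with Jacobian $|dw/ds| = \alpha(w) > 0$ on $(0,\infty)$. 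Consequently, integrating in $s$ (for fixed $y$) produces an absolutely continuous contribution in $w$, with density proportional to $1/\alpha(w)$, and Fubini then combines these pieces into a density on $(0,x_t(x))$.

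The three alternative hypotheses of the proposition are exactly what is needed to control the atomic piece sitting at $y=0$. In case 3, where $H(x,0) = 0$, this atomic piece vanishes identically. In case 1 ($\alpha > 0$ on $[0,\infty)$), the flow $s \mapsto x_{t-s}(0)$ is well-defined and a bijection from $(0,t)$ onto $(0,x_t(0))$, with a Jacobian bounded away from $0$ on compact subsets. In case 2 ($\alpha(0)=0$ with $I_0(x) < \infty$), one uses the maximal solution $x_{t-s}(0) > 0$ for $s < t$; the Jacobian $\alpha(w)$ vanishes at $w=0$, but $1/\alpha$ is integrable at $0$ precisely because $I_0(x) = \int_0^x dy/\alpha(y) < \infty$, so the resulting density is still locally integrable near $0$. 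If none of these three alternatives held, i.e. $\alpha(0) = 0$, $I_0(x) = \infty$ and $H(\cdot,0) > 0$, the law $\Pi_{t,x}$ would inherit a genuine atom at $0$ coming from jumps that send the process onto $0$ where the flow is then trapped, so the assumptions are sharp.

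For the inductive step $n \geq 2$, the strong Markov property at $S_1$ and the identity $N_t = 1 + N'_{t-S_1}$ yield
$$
\Pi_{t,x}^{(n)}(A) = \int_0^t \beta(x_s(x)) e^{-\int_0^s \beta(x_u(x))du} \int_0^{x_s(x)} \Pi_{t-s,y}^{(n-1)}(A) H(x_s(x),dy) ds,
$$
and since $\Pi_{t-s,y}^{(n-1)}$ is absolutely continuous by the induction hypothesis, Fubini immediately produces an absolutely continuous density for $\Pi_{t,x}^{(n)}$; summation over $n \geq 1$ then gives the claim. The hard part is entirely in the base case: one must carefully bookkeep how the atomic mass of $H$ at $0$ gets smoothed by integration against the jump-time density, and verify integrability of the resulting $1/\alpha$ singularity at $w=0$, which is precisely where the three alternative hypotheses intervene.
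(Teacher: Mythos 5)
Your proposal is correct and follows essentially the same route as the paper: decompose $\Pi_{t,x}$ over the number of jumps $N_t=n$ and obtain a density via the change of variables $w=x_{t-s}(y)$ along the last deterministic flow segment, with Jacobian $\alpha(w)$, the three alternative hypotheses serving exactly to guarantee that the flow started from the post-jump position (possibly $0$) is strictly positive for positive times so that this change of variables is legitimate and $1/\alpha$ remains integrable. The only cosmetic difference is that you organize the terms $n\ge 2$ by induction on the first jump, whereas the paper writes the full joint density of $(S_1,\dots,S_{n+1},X_{S_1},\dots,X_{S_n})$ and substitutes in the last jump time $s_n$ directly.
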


\begin{proof}
Let $g$ be a smooth test function having compact support in $ [0, x_t(x) ) .$ Then $ \mathbf{E}_x ( g (X_t)) = \mathbf{E}_x ( g(X_t) \mathbf{1}_{\{ t \le T_x \} })  .$ Recall that $ S_1 < S_2 < \ldots $ denote the successive jumps of $ X_t .$ Then we have 
$$   \mathbf{E}_x ( g (X_t )) = \sum_{n=1}^\infty \mathbf{E}_x ( g(X_t ) \mathbf{1}_{\{ N_t = n  \} }).$$
The joint law of $ Y_n := (S_1, \ldots, S_{n+1}, X_{S_1}  , \ldots, X_{S_n}  ) $ under $\mathbf{P}_x$ is given by 
\begin{multline*}
f_Y ( s_1, \ldots , s_{n+1}, dx_1, \ldots, d x_n ) ds_1 \ldots d s_{n+1} =   \beta ( x_{s_1} (x) ) e_{s_1} ( x) ds_1 \\ \int_{\R_+}  H ( x_{s_1} ( x) , d x_1 )  
\beta ( x_{s_2} (x_1) ) e_{s_2} ( x_1 ) ds_2 \ldots \\
  \int_{\R_+}H ( x_{s_n} ( x_{n-1}) , d x_n )   \beta ( x_{s_{n+1}} (x_n) ) e_{s_{n+1}} ( x_n) d s_{n+1} ,
\end{multline*}  
where 
$$ e_t ( x) := e^{- \int_0^t \beta ( x_s ( x) ) ds } .$$
Therefore, 
$$
 \mathbf{E}_x ( g(X_t  )\mathbf{1}_{\{ N_t = n  \} }) 
= 
 \int_{ [0, t ]^n \times [ t , \infty [ } \int_{ \R_+^n }  f_Y ( s_1, \ldots , s_{n+1}, dx_1, \ldots, d x_n )  g ( x_{t - s_n} (x_n)) ds_1 \ldots d s_{n+1} .
$$
Notice that under our condition, $ x_{t- s_n} ( x_n) > 0 $ for all $ s_n < t . $ In particular we also have that $ \alpha (x_{t- s_n} ( x_n)) > 0.$ Using the change of variables $ s_n \mapsto z (s_n)$ with $ z (s_n) :=x_{t-s_{n}}(x_{n}) \in [ x_n, x_t ( x_n) ],$ for fixed $x_{n},$
with $s_{n}=z^{-1}(z,x_{n}),$ we then have 
$$ \frac{dz}{d s_n} = - \alpha ( x_{t - s_n} (x_n) ) = - \alpha ( z) , $$ such that 
\begin{multline*}
  \mathbf{E}_x ( g(X_t \mathbf{1}_{\{ N_t = n  \} })   =\int_{ \R_+} dz \frac{ g(z)}{ \alpha ( z) }
   \Big( \int_{ [ 0, t]^{n- 1} \times [t, \infty [} \int_{ \R_+^n } \\
\mathbf{1}_{ \{ x_n \le z \le x_t (x_n) \}}  f_Y ( s_1, \ldots , z^{- 1 } ( z, x_n) ,  s_{n+1}, dx_1, \ldots, d x_n ) d s_1 \ldots ds_{n- 1 } d s_{n+1}  \Big) . 
\end{multline*}
Summing over $n $ implies the result. 
\end{proof}

Let us come back to equation (\ref{eq:forward}) together with the preceding considerations. We now know that under the conditions of Proposition \ref{prop:2}, $ \Pi_{t, x} ( dy ) $ admits a density $\pi
_{t,x}\left( y\right) $ on $\left[ 0,x_{t}\left( x\right) \right) $ and we have 
\begin{equation*}
\Pi _{t,x}\left( dy\right) =\mathbf{P}\left( T_{x}>t\right) \delta
_{x_{t}\left( x\right) }\left( dy\right) + \pi _{t,x}\left( y\right) \mathbf{1}_{\left( y\in \left[
0,x_{t}\left( x\right) \right) \right) }dy .
\end{equation*}

\eqref{eq:forward} implies that on $ [0, x_t (x) ) , $ the distribution $\delta_{t}\Pi _{t,x} $ has 
a density $ \delta_{t}\Pi _{t,x} (y) $ given by 
\begin{eqnarray*}
\delta_{t}\Pi _{t,x} (y)  &=&-\alpha \left( y\right) \pi
_{t,x}\left( y\right) +\int_{y}^{\infty }\beta \left( z\right) H\left(
z,y\right) \Pi _{t,x}\left( dz\right)\\
&=&-\alpha \left( y\right) \pi
_{t,x}\left( y\right) +\int_{y}^{\infty }\beta \left( z\right) H\left(
z,y\right) \pi _{t,x}\left( z\right) dz + \beta ( x_t (x) ) H( x_t( x), y ) \mathbf{P}\left( T_{x}>t\right).
\end{eqnarray*}
In the separable case $ H(x, y ) = h(y)/ h (x), $ this can be rewritten as 
$$
\delta_{t}\Pi _{t,x} (y) 
=  -\alpha \left( y\right) \pi _{t,x}\left( y\right)
+h\left( y\right) \int_{y}^{\infty }\frac{\beta \left( z\right) }{h\left(
z\right) }\pi _{t,x}\left( z\right) dz
 + \beta ( x_t (x) ) \frac{h(y)}{h( x_t( x) )} \mathbf{P}\left( T_{x}>t\right).
$$

If $\widetilde{\pi }_{t,x}\left( y\right) :=\alpha \left( y\right) \pi
_{t,x}\left( y\right) $,  recalling that 
$
\gamma ( x) := \beta(x) / \alpha (x) , 
$ 
we have for all  $y \in [0, x_t (x) )  ,$ 
$$
\delta_{t}\Pi _{t,x}\left( y\right) =-\widetilde{\pi }_{t,x}\left(
y\right) +\int_{y}^{\infty }\gamma \left( z\right) H\left( z,y\right) 
\widetilde{\pi }_{t,x}\left( z\right) dz \\
 + \beta ( x_t (x) ) H( x_t( x), y ) \mathbf{P}\left( T_{x}>t\right) 
$$
In the separable case, this reads 
$$
\delta_{t}\Pi _{t,x}\left( y\right) {=} -\widetilde{\pi }%
_{t,x}\left( y\right) +h\left( y\right) \int_{y}^{\infty }\frac{\gamma
\left( z\right) }{h\left( z\right) }\widetilde{\pi }_{t,x}\left( z\right) dz ,
+ \beta ( x_t (x) ) \frac{h(y)}{h( x_t( x) )} \mathbf{P}\left( T_{x}>t\right).
$$
Clearly, under the conditions of Proposition \ref{prop:2},  $\Pi _{t,x}\left( 0\right) =0.$ We conclude for $y=0$: if $h\left(
0\right) =0$, $\widetilde{\pi }_{t,x}\left( 0\right) =0.$ If $h\left(
0\right) >0,$ then 
\begin{equation*}
\widetilde{\pi }_{t,x}\left( 0\right) =h\left( 0\right) \int_{0}^{\infty }%
\frac{\gamma \left( z\right) }{h\left( z\right) }\widetilde{\pi }%
_{t,x}\left( z\right) dz + \beta ( x_t (x) ) \frac{h(0)}{h( x_t( x) )} \mathbf{P}\left( T_{x}>t\right),
\end{equation*}
and the value of $\widetilde{\pi }_{t,x}\left( 0\right) $ 
requires the knowledge of the whole $\widetilde{\pi }_{t,x}\left( z\right) ,$ for all $ z \in ( 0, x_t (x) ).$

%\begin{remark}
%If $y=\infty $, $\Pi _{t,x}\left( \infty \right) =%
%\mathbf{P}\left( X_{t}\left( x\right) <\infty \right) =1${\ since $%
%X_{t}(x)\le x_{t}(x)$ almost surely. Thus, }$\delta_{t}\Pi _{t,x}\left(
%\infty \right) =0$ and there is no mass loss. 
%\end{remark}
%
%
%\begin{remark}\label{rem:5}
%Let $ T > 0 $ and suppose that $ x_T (x) < \infty.$ Fix any $y \in ( x, x_T (x) ).$ Then $ t \mapsto \Pi_{t, x } (y ) $ is not differentiable in $ t= I_x (y):= \int_x^y \frac{1}{\alpha (s) } ds  .$
%\end{remark}
%
%The proof of this remark is in the appendix.

We close this section with the following observation.

\begin{proposition}
Suppose that $I_\infty ( x) < \infty $ and that $ \mathbf{P} ( T_x < I_\infty ( x) ) = 1.$ Grant moreover the assumptions of Proposition \ref{prop:2}. Then $\Pi_{t, x} $ is absolutely continuous on $ \R_+ $ for all $ t \geq  I_\infty ( x) .$ 
\end{proposition}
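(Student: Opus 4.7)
The argument is a direct extension of the proof of Proposition \ref{prop:2}. The two new points are: the $\{N_t = 0\}$-term in the decomposition over the number of jumps vanishes (eliminating the atom that produced the restriction ``on $[0,x_t(x))$'' in Proposition \ref{prop:2}), and the change of variables $s_n \mapsto z = x_{t-s_n}(x_n)$ must be extended to the case $x_t(x_n) = \infty$.

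First I would observe that the hypothesis $\mathbf{P}(T_x < I_\infty(x)) = 1$ together with $t \geq I_\infty(x)$ gives $\mathbf{P}(T_x > t) = 0$. Fix a bounded measurable test function $g$ with compact support in $[0,\infty)$. Setting $X_t = \infty$ on the explosion event $\{t \geq S_\infty\}$ (where $g(X_t) = 0$), the decomposition
$$\mathbf{E}_x(g(X_t)) = \sum_{n \geq 1} \mathbf{E}_x\bigl(g(X_t)\mathbf{1}_{\{N_t = n\}}\bigr)$$
contains no $n=0$ term.

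Next, for each fixed $n \geq 1$, I would reuse the joint density $f_Y$ of $(S_1,\ldots,S_{n+1},X_{S_1},\ldots,X_{S_n})$ introduced in the proof of Proposition \ref{prop:2} and the relation $X_t = x_{t-s_n}(x_n)$ on $\{s_n \leq t < s_{n+1}\}$. The change of variables $s_n \mapsto z := x_{t-s_n}(x_n)$, with fixed $x_n$ and previous jump time $s_{n-1}$, is handled in two sub-cases: if $t < I_\infty(x_n)$, then $z$ ranges in $[x_n, x_t(x_n))$ exactly as in Proposition \ref{prop:2}; if $t \geq I_\infty(x_n)$, then as $s_n$ decreases from $t$ down to $t - I_\infty(x_n)$, $z$ sweeps $[x_n,\infty)$, and for $s_n < t - I_\infty(x_n)$ the deterministic flow has already exploded so $g(x_{t-s_n}(x_n)) = 0$. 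In both sub-cases the Jacobian $ds_n = -dz/\alpha(z)$ gives an integral of the form $\int_{x_n}^{\infty} \frac{g(z)}{\alpha(z)}(\text{factor bounded in }z)\,dz$, with no leftover atom at the right endpoint.

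Finally, summing over $n$ and applying Fubini yields $\mathbf{E}_x(g(X_t)) = \int_0^\infty g(z)\pi_{t,x}(z)\,dz$ for a measurable density $\pi_{t,x}$, which gives absolute continuity of $\Pi_{t,x}$ on $\R_+$. The main subtlety to watch carefully is the bookkeeping for the sub-case $t \geq I_\infty(x_n)$: one must check that the vanishing of $g$ at $\infty$ (by compact support) compensates the singular change of variables and that, in particular, the mass $\Pi_{t,x}(\{+\infty\})$ is cleanly separated from the contribution on $\R_+$. Convergence of the series $\sum_n \pi_{t,x}(z)$ to a locally integrable function then follows from the fact that $g \mapsto \mathbf{E}_x(g(X_t))$ is a bounded positive linear functional on $C_c([0,\infty))$.
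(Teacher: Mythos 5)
Your proof is correct; note that the paper itself states this proposition without any proof (it appears as a closing ``observation'' after Proposition \ref{prop:2}), so there is no argument of the authors to compare against -- but the route you take, namely rerunning the proof of Proposition \ref{prop:2} with the $n=0$ term killed and the change of variables $s_n\mapsto z=x_{t-s_n}(x_n)$ extended to the half-line $[x_n,\infty)$, is clearly the intended one and it works. Two small remarks. First, the hypothesis $\mathbf{P}(T_x<I_\infty(x))=1$ is, by \eqref{eq:txexplosion}, equivalent to $\Gamma(\infty)=\infty$, hence it automatically holds with every after-jump position $x_n$ in place of $x$; this is worth saying explicitly, because it gives a cleaner disposal of the sub-case $s_n< t-I_\infty(x_n)$ than the vanishing of $g$ at infinity: integrating the density $f_Y$ over $s_{n+1}\in[t,\infty)$ produces the factor $\mathbf{P}(T_{x_n}>t-s_n)=e^{-[\Gamma(x_{t-s_n}(x_n))-\Gamma(x_n)]}$, which is identically $0$ once $t-s_n\geq I_\infty(x_n)$, so that region of the $(s_1,\dots,s_{n+1})$-integral is null regardless of the test function and the alleged mass at $+\infty$ coming from a non-exploded flow is in fact zero. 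Second, your appeal to a ``bounded positive linear functional on $C_c$'' at the end is more machinery than needed: the sum over $n$ of the nonnegative kernels $z\mapsto \frac{1}{\alpha(z)}(\cdots)$ is a measurable function whose integral against every bounded nonnegative $g$ with compact support reproduces $\mathbf{E}_x\bigl(g(X_t)\mathbf{1}_{\{S_\infty>t\}}\bigr)$, and a monotone class argument then identifies it directly as the Lebesgue density of $\Pi_{t,x}$ restricted to $\R_+$, exactly as in the last line of the proof of Proposition \ref{prop:2}.
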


Whenever an invariant measure $\pi  $ exists which is not equal to $ \delta_0,$ the same argument leading to \eqref{eq:prel} implies that 
$ \alpha (x) \pi (dx)  $ admits a Lebesgue density $ \tilde \pi ( x) $ solving the functional equation 
\begin{equation*}
\widetilde{\pi }\left( y\right) =\int_{y}^{\infty }\gamma \left( z\right)
H\left( z,y\right) \widetilde{\pi }\left( z\right) dz
\end{equation*}
for $\lambda-$almost all $ y > 0.$ In the separable case $H\left( z,y\right) = \frac{h\left( y\right) }{h\left(
z\right) }$, this yields the explicit expression 
\begin{equation}\label{eq:speed}
\pi \left( y\right) =C \frac{h\left( y\right) }{\alpha \left( y\right) }%
e^{-\Gamma \left( y\right) },
\end{equation}
up to a multiplicative constant $ C > 0.$ Notice that under Assumption \ref{ass:gamma0}, $ \pi $ is integrable in $ 0+$ if and only if $ \int_0 h(x) /\alpha (x) dx < \infty $ which is equivalent to $0$ reflecting in case $ h(0) > 0.$

\section{Harris Recurrence}

\subsection{Recurrence of $X$ and of the embedded chain}
In what follows we shall rely on the notion of Harris recurrence for Markov processes which we recall here for the convenience of the reader. 

\begin{definition}[see \cite{ADR}]
$X$ is called {\em Harris recurrent} if there exists some $\sigma$-finite measure $m$ on $(\R_+,{\mathcal B} (\R_+))$ such that for all $ A \in {\mathcal B} (\R_+),$ 
$$m(A)>0 \;   \mbox{ implies}  \quad P_x\left(\int^\infty_0 1_A(X_s)ds=\infty\right)=1  \mbox{ for all }  x\in \R_+ .$$
\end{definition}

If is well-known (see again \cite{ADR}) that if $X$ is Harris recurrent, then there is a unique (up to constant multiples) invariant measure 
$\pi$ for $X$, and the above property holds with $\pi$ in place of $m$. $X$ is then called {\em positive recurrent} 
(or also sometimes  {\em ergodic}) if 
$\pi(\R_+ )<\infty$,  {\em null recurrent} if $\pi (\R_+)=\infty$.

Notice that whenever our process is Harris with invariant measure $ \pi \neq \delta_0, $ then its explicit density is necessarily given by \eqref{eq:speed} (in the separable case).

\begin{example}
If $h(x) \sim e^{\Gamma(x)} $ as $ x \to \infty, $ we have $\pi(x)\sim \frac{1}{\alpha(x)},$ as $ x \to \infty.$ In particular, $ \int^\infty \pi (y ) dy < \infty $ if and only if $ I_\infty ( x) < \infty $ for some (and thus all) $ x > 0. $ This means that the deterministic flow hits state $ + \infty$  in finite time. Thus, the fact that the deterministic flow
hits state $\infty $ in finite time helps the process being positive recurrent (compare also to \eqref{eq:txexplosion}).
\end{example}

%
%\begin{example}(Non-separable cases)\\
%$-$
%If for some fixed $u\in \left( 0,1\right) ,$ $H\left(
%x,dy\right) =\delta _{ux},$then $H\left( x,y\right) =\mathbf{1}_{\{  y\geq ux \}} ,$ and 
%\begin{equation*}
%\widetilde{\pi }\left( x\right) =\int_{x}^{x/u}\gamma \left( y\right) 
%\widetilde{\pi }\left( y\right) dy.
%\end{equation*}
%The solution to this functional
%equation is given in \cite{GG} page $378,$ Example $2.$
%\\
%$-$ If, with $U\in \left( 0,1\right) $ random, with pdf $
%F_{U}\left( u\right) =\mathbf{P}\left( U\leq u\right) ,$ $H\left(
%x,y\right) =F_{U}\left( \frac{y}{x}\right) ,$ then we have
%\begin{eqnarray*}
%\widetilde{\pi }\left( x\right) &=&\int_{x}^{\infty }\gamma \left( y\right)
%F_{U}\left( \frac{x}{y}\right) \widetilde{\pi }\left( y\right) dy \\
%&=&x\int_{1}^{\infty }\gamma \left( xz\right) F_{U}\left( \frac{1}{z}\right) 
%\widetilde{\pi }\left( xz\right) dz.\text{ } 
%\end{eqnarray*}
%\end{example}
%
%
Let us now come back to our general framework. The following result establishes a relation between $ \pi$ and the invariant measure of the jump chains $(U_k)_k $ and $ (Z_k)_k$ where $ U_k = X_{S_k - } $ and $ Z_k = X_{S_k}, $ with $( S_k)_{k \geq 1 } $ the sequence of successive jump times of the process. 
\begin{proposition}
Suppose that $ X$ is Harris recurrent having invariant measure $ \pi $ such that $ 0 < \pi (\beta  ) < \infty .$ Then $ (U_k)_k $ and $ (Z_k)_k$ are both Harris recurrent. Their  invariant measures $ \pi^U $ and $\pi^Z$ are respectively given by 
$$ \pi^U ( g) = \frac{1}{\pi (\beta ) } \pi  ( \beta g ) , \;  \pi^Z ( g) = \frac{1}{\pi (\beta ) } \pi  ( \beta H g ) , $$
for any $g : \R^N \to \R$ measurable and bounded, where 
$$ \beta H g (x) = \beta ( x) \int H (x, dy ) g (y) .$$ 
\end{proposition}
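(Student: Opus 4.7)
I would exploit the compensator identity for the jump times of a PDMP, combined with the ratio ergodic theorem for Harris recurrent processes \cite{ADR}, to transport the invariance of $\pi$ to the embedded chains $(U_k)_k$ and $(Z_k)_k$. From the Poisson random measure representation underlying \eqref{eq:nt} one reads off at once that, for any bounded measurable $f$, both
\begin{equation*}
\sum_{k : S_k \le t} f(U_k) - \int_0^t \beta(X_s) f(X_s)\, ds \qquad \text{and} \qquad \sum_{k : S_k \le t} f(Z_k) - \int_0^t (\beta H f)(X_s)\, ds
\end{equation*}
are local martingales. The first identity uses that $U_k = X_{S_k-}$ is the pre-jump value and the jump intensity is $\beta(X_{s-})$; the second uses that, conditional on $U_k = y$, the post-jump position $Z_k$ is $H(y,\cdot)$-distributed. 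In particular $N_t - \int_0^t \beta(X_s)\, ds$ is a local martingale.

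Since $X$ is Harris recurrent with invariant measure $\pi$ and $0 < \pi(\beta) < \infty$, the ratio ergodic theorem of \cite{ADR} applied with numerator $\beta f$ (bounded) and denominator $\beta$ yields
\begin{equation*}
\frac{\int_0^t (\beta f)(X_s)\, ds}{\int_0^t \beta(X_s)\, ds} \xrightarrow[t \to \infty]{} \frac{\pi(\beta f)}{\pi(\beta)} \quad \mathbf{P}_x\text{-a.s.}
\end{equation*}
Harris recurrence applied to the set $\{\beta > 0\}$ (of positive $\pi$-measure since $\pi(\beta) > 0$) forces $\int_0^\infty \beta(X_s)\, ds = \infty$ a.s., so $N_t \to \infty$; the classical strong law for counting processes with diverging predictable compensator then gives $N_t / \int_0^t \beta(X_s)\, ds \to 1$ a.s. Combining these limits, and performing the analogous computation with $\beta H f$ in place of $\beta f$, one obtains
\begin{equation*}
\frac{1}{N_t} \sum_{k \le N_t} f(U_k) \to \frac{\pi(\beta f)}{\pi(\beta)}, \qquad \frac{1}{N_t} \sum_{k \le N_t} f(Z_k) \to \frac{\pi(\beta H f)}{\pi(\beta)},
\end{equation*}
$\mathbf{P}_x$-a.s., which are precisely $\pi^U(f)$ and $\pi^Z(f)$ as announced.

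It remains to upgrade this identification to Harris recurrence of the two chains. Fix a Borel set $A$ with $\pi^U(A) > 0$, i.e.\ $\pi(\beta \mathbf{1}_A) > 0$. Harris recurrence of $X$ forces $\int_0^\infty \beta(X_s) \mathbf{1}_A(X_s)\, ds = \infty$ $\mathbf{P}_x$-a.s., and since this integral is the predictable compensator of the nondecreasing point process $\sum_k \mathbf{1}_A(U_k)$, the classical fact that a counting process with a.s.\ infinite compensator is itself a.s.\ infinite yields $\sum_k \mathbf{1}_A(U_k) = \infty$ a.s. The same argument applied to the compensator $\int_0^t (\beta H \mathbf{1}_A)(X_s)\, ds$ handles $(Z_k)_k$. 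Uniqueness (up to constants) of the invariant measure of a Harris recurrent chain then forces the Cesàro limits above to coincide with the invariant measures, yielding the announced formulas. The main technical care I anticipate lies in the ergodic step: one must ensure the ratio ergodic theorem is applied in a form valid when $\pi$ may be infinite, and that the martingale localization is strong enough to give genuinely almost-sure (not merely in-probability) ratio convergence along the random time-change $N_t$.
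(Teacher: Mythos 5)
Your proof is correct and follows essentially the same route as the paper's: both identify the Cesàro limits of $f(U_k)$ and $f(Z_k)$ through the compensator $\beta(X_{s-})\,ds\,\delta_{X_{s-}}(dy)\,H(y,dz)$ of the marked jump measure and the ergodic theorem for the Harris process $X$. Your variant is if anything slightly more careful: you use the ratio ergodic theorem with denominator $\int_0^t \beta(X_s)\,ds$ (valid even when $\pi$ is infinite) together with the strong law $N_t/\int_0^t \beta(X_s)\,ds \to 1$, where the paper normalizes both additive functionals by $t$, and you establish Harris recurrence of the embedded chains directly from the divergence of their compensators rather than reading it off the positivity of the Cesàro limit.
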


\begin{proof}We just give the proof for $ (Z_k)_k, $ the case of $ (U_k)_k $ is treated analogously.
Let $ g \geq 0 $ be a bounded positive test function. It is sufficient to prove that 
$ \frac{1}{n} \sum_{k=1}^n g ( Z_k) \to \pi^Z ( g) $
as $n \to \infty, $ $P_x-$almost surely, for any fixed starting point $x .$ But
$$ \frac{1}{n} \sum_{k=1}^n g ( Z_k) = \frac{1}{n} \sum_{k=1}^n g ( X_{S_k}).$$
Introduce the jump measure 
$$ \mu ( ds, dy , dz  ) = \sum_{n \geq 1 } 1_{ \{  S_n < \infty \} }  \delta_{ (S_n, X_{S_n - } , X_{S_n} ) }  (dt, dy, dz).$$
Its compensator is given by 
$$\nu  (ds, dy, dz) = \beta ( X_{s-} ) ds \delta_{X_{s-} } (dy) \int H (y, dz) .$$ 

Putting $ N_t = \sup \{ n : S_n \le t \},$  
$$
 \lim_{ n \to \infty }  \frac{1}{n} \sum_{k=1}^n g ( X_{S_k}) = \lim_{t \to \infty}\frac{t}{N_t} \frac{1}{t} \sum_{k=1}^{N_t} g ( X_{S_k}) 
= \lim_{t \to \infty}\frac{t}{N_t} \frac{A_t}{t}  , $$
where $A_t = \int_0^t\int_{\R^N}\int_{\R^N} g (z) \mu ( ds, dy, dz) $ and $N_t$ are additive functionals of the process $X.$ 
By the ergodic theorem for the process $X$ (which holds thanks to the Harris recurrence of $ X_t$), $ N_t/t \to E_\pi ( N_1) $ and $ A_t/t \to E_\pi ( A_1 ) ,$ and this convergence holds almost surely, for every starting point $x.$ But $ E_\pi (N_1) = E_\pi ( \hat N_1) $ and $E_\pi ( A_1) = E_\pi (\hat A_1) , $ where 
$$
\hat N_t = \int_0^t \int \int \nu  (ds, dy, dz) = \int_0^t \beta  (X_s) ds $$
and
$$   \hat A_t = \int_0^t \int \int g ( z) \nu  (ds, dy, dz)=  \int_0^t \beta ( X_s) \int H(X_s, dz ) g(z) ds = \int_0^t \beta H g ( X_s) ds   .
$$
Therefore, $ E_\pi ( N_1) = \pi( \beta ) $ and $ E_\pi ( A_1) = \pi ( \beta H g) ,$
and this finishes the proof.
\end{proof}

We use the above considerations to discuss rapidly that explosion of the process $X$ (in the sense that $ S_\infty < \infty $ with positive probability) is only possible if the jump chain $Z_n$ is transient.

\begin{proposition}\label{prop:7}
If $Z_n$ is recurrent, explosion of $X_t $ (that is, $ \lim S_n = S_\infty  < \infty $ with positive probability) is not possible. 
\end{proposition}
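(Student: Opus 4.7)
The plan is to reduce the non-explosion claim to a divergence statement via Proposition \ref{prop:explos2}, and then deduce the divergence from the hypothesis that $(Z_n)$ is Harris recurrent. By Proposition \ref{prop:explos2}, almost surely $\{S_\infty < \infty\}$ coincides with $\{\sum_n e(Z_n) < \infty\}$, where $e(x)$ is defined in \eqref{eq:e}. It therefore suffices to prove that, under Harris recurrence of $(Z_n)$, one has $\sum_n e(Z_n) = \infty$ $\mathbf{P}_x$-almost surely for every starting point $x$.

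To this end, I would first observe that $e$ is continuous on $[0, \infty)$ (by dominated convergence under the standing integrability hypotheses) and strictly positive on $(0, \infty)$, so on any compact $K \subset (0, \infty)$ we have $\varepsilon_K := \inf_K e > 0$. I would then pick such a compact $K$ satisfying $\pi^Z(K) > 0$, where $\pi^Z$ is the invariant measure of $(Z_n)$ identified in the previous proposition (up to normalisation, $\pi^Z \propto \pi \beta H$). Excluding the trivial case $\pi = \delta_0$, the invariant measure $\pi$ of the PDMP has support meeting $(0, \infty)$ -- in the separable case this is immediate from the explicit density \eqref{eq:speed} -- and since $\beta$ is strictly positive on $(0, \infty)$, $\pi^Z$ too has positive mass on some such compact $K$. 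The defining property of Harris recurrence then gives
$$\sum_{n \ge 1} \mathbf{1}_K(Z_n) = \infty \qquad \mathbf{P}_x\text{-a.s.},$$
and hence $\sum_n e(Z_n) \geq \varepsilon_K \sum_n \mathbf{1}_K(Z_n) = \infty$ $\mathbf{P}_x$-a.s., as required.

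The main delicate point is producing the compact set $K \subset (0, \infty)$ with $\pi^Z(K) > 0$; equivalently, ruling out the degenerate possibility that $\pi^Z$ is concentrated on the boundary $\{0, \infty\}$. As noted above, in the separable setting the explicit density \eqref{eq:speed} makes this transparent. A more robust alternative is to invoke topological recurrence: combining Harris recurrence of $(Z_n)$ with the strong Feller property established in Proposition \ref{prop:strongfeller}, the chain revisits every neighborhood of its starting point $x \in (0, \infty)$ infinitely often, so taking $K$ to be a small closed neighborhood of $x$ contained in $(0, \infty)$, on which $e \geq e(x)/2$ by continuity, again yields the required divergence. Everything else -- continuity and positivity of $e$, extraction of $\varepsilon_K$, and the final comparison -- is routine.
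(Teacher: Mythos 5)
Your argument is correct in substance and follows the same overall route as the paper's proof: both reduce the claim, via Proposition \ref{prop:explos2}, to showing that $\sum_n e(Z_n)=\infty$ almost surely, and both extract this divergence from the recurrence of $(Z_n)$ together with the strict positivity of $e$. Where you differ is in the final step: the paper invokes the ratio limit theorem for Harris chains, $\sum_{k\le n}e(Z_k)/\sum_{k\le n}g(Z_k)\to\pi^Z(e)/\pi^Z(g)$, so that explosion would force $\pi^Z(e)=0$ and hence $e=0$ $\pi^Z$-a.e., contradicting $e>0$ on $(0,\infty)$; you instead use the defining occupation property of Harris recurrence on a compact $K$ with $\pi^Z(K)>0$ and $\inf_K e>0$. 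Your version is more elementary, which is a genuine gain.

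Two caveats. First, the degenerate case $\pi^Z=\delta_0$ is not a pathology one may simply ``exclude'': it occurs for total disasters, $H(x,\cdot)=\delta_0$, where $Z_n=0$ for all $n\ge 1$. The paper dispatches this case as trivial; your argument as written finds no compact $K\subset(0,\infty)$ charged by $\pi^Z$ there. The fix is immediate: under the hypothesis $I_0(x)<\infty$ of Proposition \ref{prop:explos2}, one has $e(0)=\mathbf{E}(T_0)>0$, so $e$ is bounded below on compacts of $[0,\infty)$ and not merely of $(0,\infty)$, and any compact $K\subset[0,\infty)$ with $\pi^Z(K)>0$ (which always exists) yields the divergence. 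Second, your ``more robust alternative'' is false as stated: Harris recurrence plus the strong Feller property of Proposition \ref{prop:strongfeller} does not imply that the chain revisits every neighborhood of its starting point --- in the total-disaster example the chain started from $x>0$ never returns near $x$. Relatedly, the identification $\pi^Z\propto\pi(\beta H\,\cdot)$ comes from a proposition that presupposes Harris recurrence of $X$; here you should appeal only to the invariant measure furnished directly by the Harris recurrence of $(Z_n)$ itself, which is all your main argument needs.
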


\begin{proof}
If $ \pi^Z =  \delta_0, $ then non-explosion of the continuous time process is trivially implied. Let us therefore suppose that $ \pi^Z \neq 0.$  We know that explosion of $X $ is equivalent to $ \sum_{n \geq  1}e ( Z_n)  < \infty $ (recall the definition of $e$ in \eqref{eq:e}). But, if $Z_n$ is recurrent (possibly null-recurrent), we know that for any function $g > 0 $ such that $ \pi^Z (g) \in (0, \infty ), $ 
$$ \frac{ \sum_{k=1}^n e( Z_k) }{ \sum_{k=1}^n g ( Z_k) }  \to \pi^Z ( e )/ \pi^Z ( g)  $$
almost surely. Since $ \sum_{k=1}^n g ( Z_k) \uparrow \infty $ as $ n \to \infty, $ explosion implies that $\pi^Z ( e ) = 0 ,$ whence $ e = 0  $ $\pi^Z -$almost surely. $e  $ being strictly positive on $(0, \infty ) , $ this yields a contradiction. 
\end{proof} 

\begin{corollary}
In particular, if $ Z_n$ is recurrent (positive or null), then $ X$ is also recurrent (positive or null).
\end{corollary}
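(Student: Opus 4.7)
The plan is to lift the Harris recurrence of the post--jump chain $(Z_n)$ to the continuous--time process $X$ by assembling an invariant measure for $X$ out of $\pi^Z$, and then using the ergodic behaviour of $Z_n$ to control the accumulated occupation time of $X$.

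\emph{Step 1: Setup and decomposition.} Proposition \ref{prop:7} guarantees $S_\infty = \infty$ almost surely, so $(X_t)_{t \ge 0}$ is well defined for all times and on each interval $[S_n, S_{n+1})$ the process follows the deterministic flow started at $Z_n$. For any Borel set $A \subset \R_+$ one has the pathwise identity
$$
\int_0^\infty \mathbf{1}_A(X_s)\,ds \;=\; \sum_{n \ge 0} \phi_A\bigl(Z_n,\, S_{n+1}-S_n\bigr), \quad \phi_A(z,t) := \int_0^t \mathbf{1}_A(x_u(z))\,du.
$$
By the strong Markov property combined with \eqref{eq:tx0}, the conditional expectation of the $n$-th summand given $\mathcal{F}_{S_n}$ equals $f_A(Z_n)$, where
$$
f_A(z) := \int_0^\infty e^{-[\Gamma(x_t(z))-\Gamma(z)]}\,\mathbf{1}_A(x_t(z))\,dt.
$$

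\emph{Step 2: A speed measure for $X$ from $\pi^Z$.} I would introduce the Palm--type measure
$$
\pi(g) := \int \pi^Z(dz)\,\mathbf{E}_z\!\left[\int_0^{T_z} g(X_s)\,ds\right].
$$
The change of variable $u \mapsto x_u(z)$ identifies $\pi$, up to a multiplicative constant, with the density $h(y)e^{-\Gamma(y)}/\alpha(y)$ of \eqref{eq:speed} in the separable case, so that $\pi$ is $\sigma$--finite and $\pi(A) = \pi^Z(f_A)$ for every Borel $A$. Consequently, if $\pi(A) > 0$ then $\pi^Z(f_A) > 0$, and the Harris recurrence of $(Z_n)$ applied to the non--negative test function $f_A$ yields $\sum_{n \ge 0} f_A(Z_n) = +\infty$, $\mathbf{P}_x$--almost surely, for every initial point $x$.

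\emph{Step 3: From conditional divergence to pathwise divergence.} The decisive step, which I expect to be the main obstacle, is to upgrade the divergence of $\sum_n f_A(Z_n) = \sum_n \mathbf{E}\bigl[\phi_A(Z_n, S_{n+1}-S_n)\mid \mathcal{F}_{S_n}\bigr]$ to the divergence of the actual random sum $\sum_n \phi_A(Z_n, S_{n+1}-S_n)$, since the implication fails in general for non--negative random variables (sparse Bernoulli counter-examples). To close this gap I would invoke the strong Feller property of $(Z_n)$ established in Proposition \ref{prop:strongfeller} and perform a Nummelin splitting, producing a small set $C$ with $\pi^Z(C) > 0$ on which $(Z_n)$ admits a regenerative atom. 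The $X$--excursions between successive entries of $Z_n$ into $C$ are then i.i.d.\ under the law of the split chain, with common expected occupation time of $A$ equal to a strictly positive multiple of $\pi(A)$, so that the second Borel--Cantelli lemma delivers $\int_0^\infty \mathbf{1}_A(X_s)\,ds = +\infty$, $\mathbf{P}_x$--almost surely, which is precisely the Harris recurrence of $X$.
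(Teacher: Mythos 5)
Your proposal is correct, but it follows a genuinely different and considerably more elaborate route than the paper. The paper's own proof is two lines: Proposition \ref{prop:7} gives $S_n \uparrow \infty$, and for any $A$ with $\pi^Z(A)>0$ Harris recurrence of $(Z_n)$ forces $Z_n = X_{S_n} \in A$ infinitely often, whence $\limsup_{t\to\infty} \mathbf{1}_A(X_t)=1$ and the recurrence of $X$ is declared. You instead attack the occupation-time criterion in the definition of Harris recurrence head-on: you decompose $\int_0^\infty \mathbf{1}_A(X_s)\,ds$ along the jump intervals, build the candidate invariant measure of $X$ as a Palm measure over $\pi^Z$ (correctly recovering \eqref{eq:speed}), and then confront the real difficulty, namely passing from the divergence of $\sum_n f_A(Z_n)$ to that of the actual random sum $\sum_n \phi_A(Z_n,S_{n+1}-S_n)$. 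This is a strength of your write-up: the paper's argument, taken literally against its own definition of Harris recurrence, only shows that $X$ enters $A$ at arbitrarily large times, and silently uses that each such visit contributes a positive amount of occupation time; your version makes the missing quantitative step explicit. Two remarks. First, the Nummelin splitting in Step 3 is heavier machinery than needed: if you take $A$ to be a bounded interval bounded away from $0$, then $\phi_A(z,t)\le \int_A dy/\alpha(y)<\infty$ is uniformly bounded, so the conditional (L\'evy) Borel--Cantelli lemma for bounded nonnegative adapted sequences gives $\{\sum_n \phi_A(Z_n,S_{n+1}-S_n)=\infty\}=\{\sum_n f_A(Z_n)=\infty\}$ almost surely, and such sets $A$ generate a $\sigma$-finite measure, which suffices for the definition. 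Second, note that the identification of your Palm measure with the density in \eqref{eq:speed} is only available in the separable case, but your construction of $\pi$ from $\pi^Z$ and the rest of the argument do not need it, so the proof covers the corollary in the generality in which it is stated.
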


\begin{proof}
$Z_n$ recurrent implies $ S_n \uparrow \infty $ almost surely, thanks to Proposition \ref{prop:7}. Now let $ A \in {\mathcal B} ( \R_+) $ be such that $\pi^Z ( A) > 0 $ implying that $ 1_A ( Z_n) = 1 $ infinitely often. Then $ \limsup_{t \to \infty } 1_A ( X_t ) \geq \limsup_{n \to \infty } 1_A ( X_{S_n } ) = \limsup_{n \to \infty } 1_A ( Z_n) = 1 , $ whence the recurrence of $ X_t.$ 
\end{proof}

\subsection{Exit probabilities and excursions}
In this section we propose a thorough study of the return times to $0 $ that enable us to state sufficient conditions for positive recurrence. Throughout this section we impose Assumptions \ref{ass:gammainfty} and \ref{ass:gamma0}. With $x>0$, we introduce
\begin{equation*}
\tau _{x,0}=\inf \left\{ t>0:X_{t}=0\mid X_{0}=x\right\} 
\end{equation*}
the first time the process comes back to $0.$
%\begin{proposition}
%We have $\tau _{x,0} < \infty $ almost surely if and only if 
%$$ \int_0^\infty \beta (X_s ) H( X_s , 0 ) ds = \infty $$
%$ \mathbf{P}_x-$ almost surely. 
%\end{proposition}
%
%\begin{proof}
%Writing $ N_t^0 := \sum_{ n \geq 1} 1_{\{S_n \le t \}} 1_{\{ X_{S_n} = 0 \}} , $ the result follows from the fact that the predictable compensator $ \hat N_t^0 $ of $N_t^0$ is given by 
%$$ \hat N_t^0 = \int_0^t  \beta (X_s ) H( X_s , 0 ) ds $$
%together with the fact $\tau _{x,0} < \infty $ if and only if $ N_\infty^0 = \infty.$ 
%\end{proof}
%
%
%
%\begin{corollary}
%If  $\beta(.)H(.,0) $ is lowerbounded, then $(X_t)_t $ is recurrent.
%\end{corollary}

In what follows we fix $ 0 < x < b$ and are interested in establishing explicit formulae for 
$$p(x,b) = \mathbf{P}_x\left( \tau _{x,0}<\tau _{x,b}\right) .$$ 
Notice that it follows from the properties of our process that $ \lim_{ x \to b } p(x,b) = p(b,b) = 0.$ However, we do not have that $\lim_{x \to 0 } p(x,b) = p (0,b ) = 1 .$ In general, $ p ( 0 ,b) < 1 $ is related to the height of an excursion between two successive visits to $0,$ see below. 

A first step analysis implies that 
$$ p (x,b) = \int_0^{I_x( b ) } {\mathcal L} ( T_x) (ds) \left( H( x_s ( x) , 0 ) +  \int_{0+}^{x_s ( x) }  H( x_s ( x), dy ) p (y,b ) \right)   ,$$ 
with $ I_x(b ) = \int_x^b \frac{dy}{ \alpha (y )} $ the time needed to go from $x$ to $b.$ 
A simple change of variables implies that 
$$ p( x,b) = \int_x^b \gamma ( v) e^{ - (\Gamma ( v) - \Gamma ( x) ) } H( v, 0 ) dv +\int_x^b \gamma ( v) e^{ - (\Gamma ( v) - \Gamma ( x) ) }  \int_{0+}^v  H(v, dy ) p(y,b) .$$ 
In the sequel we shall only consider the separable case $H\left( x,y\right) =\frac{h\left( y\right) }{%
h\left( x\right) }$ with 
$$ h(0) > 0.$$ 
In this case, the above formula implies that $x \mapsto p(x,b)  \in C^1 ( [0, b ] ) .$ 
Recalling that $ p( b,b ) = 0, $ we rewrite 
$$ p (y,b ) = - \int_y^b p' (z,b) dz ,$$
where $p' (x, b ) = \partial_x p (x, b ) $ denotes partial derivative with respect to the initial position.  We obtain
\begin{multline*}
p(x,b) =(1 - p(0,b) )  \int_x^b \! \gamma ( v) e^{ - (\Gamma ( v) - \Gamma ( x) ) } \frac{h(0)}{h(v)}  dv +  \int_x^b \!  \gamma ( v) e^{ - (\Gamma ( v) - \Gamma ( x) ) } p(v,b) dv \\
- \int_x^b \frac{\gamma (v) }{h(v)} e^{ - (\Gamma ( v) - \Gamma ( x) ) } \int_0^v h(z) p' (z,b) dz dv .
\end{multline*}
Taking derivatives, we obtain 
$$ p'(x,b) h (x) = \gamma ( x) \int_0 ^x h(z)  p'(z,b) dz - \gamma ( x) (1 - p(0,b) ) h(0) . $$
Let 
$$ \kappa ( x) := \int_0 ^x h(z)  p'(z,b) dz - (1 - p(0,b )) h (0) , 0 \le x \le b , $$ 
then we have $ \kappa ' (x) = h (x) p' (x,b) $ and $ \kappa (0) =- (1 - p(0,b )) h (0) .$ The above equation reads 
$$ \kappa' (x) = \gamma ( x) \kappa ( x) $$  leading to 
$$ \kappa ( x) = C e^{\Gamma( x) }, $$ 
where we choose $ \Gamma $ such that $ \Gamma (0) = 0 $ and where $C$ is such that 
$$C  = - (1 - p(0,b )) h (0)  ; \mbox{ that is, } C = -  h(0) ( 1 - p(0,b ) ).$$ 
We deduce from this that 
$$
p' (x,b)   = C\frac{\gamma \left( x\right) }{h\left(
x\right) }e^{\Gamma \left( x\right) } ,$$
and thus, using once more that $ p(b,b) = 0,$ 
$$ 
p \left( x,b \right)  =- C  \int_{x}^b \frac{\gamma \left( y\right) }{h\left(
y\right) }e^{\Gamma \left( y\right) }dy=    h(0) (1 - p(0 ,b))  \int_{x}^b \frac{\gamma \left( y\right) }{h\left(
y\right) }e^{\Gamma \left( y\right) }dy . 
$$
Finally, the value of $ p(0,b) $ is deduced from 
$$ p(0,b) = h(0) (1 - p(0,b ))  \int_{0}^b \frac{\gamma \left( y\right) }{h\left(
y\right) }e^{\Gamma \left( y\right) }dy  .$$
Let 
\begin{equation}\label{eq:s}
s(x) = \int_0^x \frac{\gamma \left( y\right) }{h\left(
y\right) }e^{\Gamma \left( y\right) }dy , \, \Gamma (y) = \int_0^y \gamma ( t) dt.
\end{equation}
Notice that under Assumption \ref{ass:gamma0} and supposing that $ h(0) > 0, $ $ s(x) $ is well-defined for any $x \geq 0.$ 

We obtain 
\begin{equation}\label{eq:p0h}
 p(0 ,b) = \frac{h(0)s(b)}{1+h(0)s(b)} \mbox{ and } \mathbf{P}\left( \tau _{x,0}<  \tau _{x,b}\right) =  p(0, b ) [ 1 - \frac{ s(x)  }{ s(b) }] .
\end{equation} 

%\footnote{Notice that in the absorbing case $ I_0 ( x) = \infty, $ we have $ \tau_{0, 0 } = 0$ and thus $p(0,h ) = 1.$ However, $ \lim_{x \to 0} p( x, h ) \neq 1, $ that is, $ x \mapsto p(x, h ) $ is discontinuous in $0$ in this case. } 

We have just proven the following
\begin{theorem}\label{prop:scale} 
Grant Assumptions \ref{ass:gammainfty} and \ref{ass:gamma0} and let $ 0 < x < b .$  Suppose moreover that $H\left( x,y\right) =\frac{h\left( y\right) }{%
h\left( x\right) }$ with $ h(0 ) > 0. $ Suppose that $ \Gamma (0) = 0 $ and put 
$$ \kappa := 1/h(0) .$$ 
Then 
\begin{equation}\label{eq:xh0}
  \mathbf{P}\left( \tau _{x,0}> \tau _{x,b}\right)   = \frac{ \kappa + s( x) }{\kappa + s(b) }. \end{equation}
\end{theorem}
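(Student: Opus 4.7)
The plan is to perform a first-step analysis for $p(x,b) := \mathbf{P}_x(\tau_{x,0} < \tau_{x,b})$ by conditioning on the pair $(T_x, X_{T_x})$. Since the deterministic flow is strictly increasing, $\tau_{x,b}$ can only be a continuous hit, so on $\{T_x \geq I_x(b)\}$ with $I_x(b) = \int_x^b dy/\alpha(y)$ we have $\tau_{x,b} < \tau_{x,0}$; on the complementary event the strong Markov property restarts $p$ from the post-jump position. Using the density of $T_x$ coming from \eqref{eq:tx0} and changing variables from time to the post-flow level $v = x_s(x)$, this yields the integral equation
$$p(x,b) = \int_x^b \gamma(v)\, e^{-(\Gamma(v)-\Gamma(x))} \Bigl[H(v,0) + \int_{0+}^v H(v,dy)\, p(y,b)\Bigr] dv.$$

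Next I would specialise to the separable kernel $H(v,y)=h(y)/h(v)$ with $h(0)>0$. In this regime $x \mapsto p(x,b)$ is $C^1$ on $[0,b]$, so I can differentiate the identity in $x$. Using $p(b,b)=0$ to write $p(y,b) = -\int_y^b p'(z,b)\,dz$ and swapping the order of integration, the resulting differential relation reduces to the linear form
$$h(x)\, p'(x,b) = \gamma(x)\Bigl[\int_0^x h(z)\, p'(z,b)\, dz - h(0)(1-p(0,b))\Bigr].$$
Setting $\psi(x) := \int_0^x h(z)\, p'(z,b)\, dz - h(0)(1-p(0,b))$ turns this into the elementary ODE $\psi' = \gamma \psi$ with initial value $\psi(0) = -h(0)(1-p(0,b))$, whose solution is $\psi(x) = -h(0)(1-p(0,b))\, e^{\Gamma(x)}$ since $\Gamma(0)=0$.

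From $\psi$ one recovers $p'(x,b) = -h(0)(1-p(0,b))\, \gamma(x) e^{\Gamma(x)}/h(x)$, and integrating from $x$ to $b$ (using $p(b,b)=0$) gives $p(x,b) = h(0)(1-p(0,b))[s(b)-s(x)]$ with $s$ as in \eqref{eq:s}. Evaluating at $x=0$ gives the self-consistency equation fixing $p(0,b) = h(0) s(b)/(1+h(0)s(b))$; substituting back and rewriting with $\kappa = 1/h(0)$ collapses the expression to
$$1 - p(x,b) = \frac{\kappa + s(x)}{\kappa + s(b)},$$
which is the claim. The main obstacle will be the $C^1$ regularity step that legitimises differentiating the integral equation; this is precisely where the separability of $H$ and the hypothesis $h(0)>0$ enter critically, and it is also what allows $p(0,b)<1$, encoding the positive probability of instantaneous disasters $H(v,0)=h(0)/h(v)>0$ that contribute directly to returns to $0$ before hitting $b$.
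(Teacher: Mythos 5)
Your proposal is correct and follows essentially the same route as the paper: the same first-step decomposition conditioning on $(T_x, X_{T_x})$, the same change of variables from time to the post-flow level, the same reduction via $p(y,b)=-\int_y^b p'(z,b)\,dz$ to the linear ODE $\psi'=\gamma\psi$ (the paper calls your $\psi$ by the name $\kappa(\cdot)$), and the same self-consistency equation at $x=0$ fixing $p(0,b)$. The only point treated somewhat lightly in both your sketch and the paper is the $C^1$ regularity of $x\mapsto p(x,b)$, which you correctly identify as the step where separability and $h(0)>0$ enter.
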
 

Notice that in case  $h\left( x\right) =1$ (total disaster), we obtain
\begin{equation*}
\mathbf{P}( \tau _{x,b}<\tau _{x,0}) =e^{- (\Gamma(h)- \Gamma(x))} . 
\end{equation*}

{\bf Discussion of the role of $0.$} Theorem \ref{prop:scale} holds true in both cases $0$ reflecting or absorbing. However what follows does only make sense in case  $0$ is reflecting, that is, $ I_0 (x) < \infty. $ In this case we may introduce the height $\mathcal H$ of an excursion by
$$ \mathcal H = \sup \{ X_t  : t < \tau_{0, 0} | X_0 = 0  \} ,$$
where $ \tau_{0,0} = \inf \{ t > 0 : X_t = 0  \}  > 0 $ is the first return time to $0.$ Since $  \tau_{x, 0 } \stackrel{\mathcal L}{\to } \tau_{0,  0 } $ as $ x \to 0  ,$ we may interpret $ p (0, b ) $ by means of the distribution function of the height of an excursion. 

\begin{proposition} 
Grant the assumptions of Theorem \ref{prop:scale} and suppose that $ I_0 ( x) < \infty .$ Then 
\begin{equation}\label{eq:H}
{\mathbf P}(\mathcal H < b ) = {\mathbf P} ( \tau_{0,0} < \tau_{0, b } )   = p(0, b ) = \frac{ s(b) }{ \kappa + s(b ) } .
\end{equation}
\end{proposition}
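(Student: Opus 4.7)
The plan is in two steps: an event identification relating $\mathcal H$ to the competing hitting times $\tau_{0,0}$ and $\tau_{0,b}$, followed by an evaluation of $\mathbf{P}_0(\tau_{0,0} < \tau_{0,b})$ obtained by extending the first-step analysis of Theorem \ref{prop:scale} to the boundary point $x = 0$.

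\emph{Event identification.} First I would show that $\{\mathcal H < b\} = \{\tau_{0,0} < \tau_{0,b}\}$. Between successive jumps the map $t \mapsto X_t$ is strictly increasing, while at a jump time $X_t$ moves strictly downward. Starting from $X_0 = 0 < b$, the process can therefore only reach the level $b$ through a continuous crossing, so $\mathcal H \geq b$ if and only if $\tau_{0,b} \leq \tau_{0,0}$; since $X_{\tau_{0,b}} = b > 0 = X_{\tau_{0,0}}$, the equality case cannot occur, and taking complements gives the identity.

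\emph{Evaluation at the boundary.} The hypothesis $I_0(x) < \infty$ says that $0$ is reflecting, so the maximal flow $t \mapsto x_t(0)$ is positive for all $t > 0$ and the first jump time $T_0$ out of $0$ is well-defined, with law given by \eqref{eq:tx0} evaluated at $x = 0$ (using $\Gamma(0) = 0$). The first-step decomposition that produced the integral equation for $p(x,b)$ in the proof of Theorem \ref{prop:scale} applies verbatim with $x$ replaced by $0$, and solving it along the same lines yields
$$\mathbf{P}_0(\tau_{0,0} < \tau_{0,b}) \;=\; 1 - \frac{\kappa}{\kappa + s(b)} \;=\; \frac{s(b)}{\kappa + s(b)}.$$
Equivalently, one may simply let $x \downarrow 0$ in \eqref{eq:xh0}, invoking $s(0) = 0$, which follows from Assumption \ref{ass:gamma0} together with $h(0) > 0$ (so that $\gamma/h$ is integrable near $0$). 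Combined with the event identification of the first step, this yields the proposition.

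\emph{Main obstacle.} The only point that is not purely algebraic is the legitimacy of the first-step analysis at the initial condition $x = 0$. This requires that the PDMP be well-defined under $\mathbf{P}_0$ and that the strong Markov property hold at $T_0$. Reflection of $0$ ensures the existence of $\mathbf{P}_0$ (through the spontaneous-generation convention for $x_t(0)$) and identifies $T_0$ as an honest stopping time with absolutely continuous law on $[0,\infty)$; the change of variables $s \mapsto v = x_s(0)$ then produces the same integrand appearing in the derivation at $x > 0$, so no new computation is needed.
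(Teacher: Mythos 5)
Your proposal is correct and follows essentially the route the paper intends: the proposition is an immediate consequence of the formula \eqref{eq:p0h} for $p(0,b)$ obtained in the derivation of Theorem \ref{prop:scale} (equivalently, of letting $x\downarrow 0$ in \eqref{eq:xh0} using $s(0)=0$), combined with the identification $\{\mathcal H < b\}=\{\tau_{0,0}<\tau_{0,b}\}$ coming from the fact that the process can only cross level $b$ continuously from below. Your additional care about the legitimacy of the first-step analysis at $x=0$ (via the reflecting boundary and the spontaneous-generation solution) is exactly the point the paper handles by invoking $\tau_{x,0}\stackrel{\mathcal L}{\to}\tau_{0,0}$, and the only remaining detail you gloss over — that $\{\mathcal H\geq b\}$ and $\{\tau_{0,b}\leq\tau_{0,0}\}$ could differ on the event where the supremum equals $b$ without being attained — is a null event since $\mathcal H$ has a continuous distribution.
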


%\begin{remark}
%Suppose $ 0$ is absorbing, that is, $ I_0 ( x) = \infty .$ In this case, letting $x \to 0$ in \eqref{eq:xh0}, we still obtain 
%$$ \lim_{ x \to 0}   \mathbf{P}\left( \tau _{x,0}> \tau _{x,h}\right)   = \frac{ \kappa  }{\kappa + s(h) } \neq 0  \mbox{ and  } \lim_{ x \to 0}   \mathbf{P}\left( \tau _{x,0} < \tau _{x,h}\right)   = \frac{ s(h)   }{\kappa + s(h) } \neq 1.$$
%This means that $ \lim_{x \to 0} \tau_{ x, 0 } \neq \tau_{0, 0} , $ in other words, $ x \mapsto \tau_{x, 0 } $ is discontinuous in $ 0.$
%\end{remark}

The successive return times of the process $X$ to $0$ induce a basic regeneration scheme and are thus related to the recurrence of the process. 

\begin{proposition}\label{cor:14}
Grant Assumptions  \ref{ass:gammainfty} and \ref{ass:gamma0} and suppose moreover that $H\left( x,y\right) =\frac{h\left( y\right) }{%
h\left( x\right) }$ with $ h(0 ) > 0,$ that $I_\infty ( x) = \infty $ and $I_0(x)<\infty .$ Then the process is recurrent if and only if $ s( \infty ) = \infty , $ where the function $ s (x)$ is given by  \eqref{eq:s}. In this latter case, $ \tau_{x, 0 } < \infty $ almost surely, and  the unique invariant measure possesses a Lebesgue density on $ \R_+$ which is given by \eqref{eq:speed}. The process is positive recurrent if $\int^\infty \frac{h(x)}{\alpha ( x) } e^{ - \Gamma ( x)} dx < \infty ,$ null-recurrent else. 
\end{proposition}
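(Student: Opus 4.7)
The plan is to combine the exit probability formula from Theorem \ref{prop:scale} with a regeneration argument based on the reflecting nature of $0$, and then to identify the unique invariant measure through the speed density \eqref{eq:speed}.

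\textbf{Recurrence dichotomy.} From Theorem \ref{prop:scale},
\[
\mathbf{P}_x(\tau_{x,0}<\tau_{x,b}) \;=\; \frac{s(b)-s(x)}{\kappa+s(b)}.
\]
If $s(\infty)=\infty$, letting $b\to\infty$ the right-hand side tends to $1$, and the inclusion $\{\tau_{x,0}<\tau_{x,b}\}\subset\{\tau_{x,0}<\infty\}$ forces $\mathbf{P}_x(\tau_{x,0}<\infty)=1$ for every $x>0$. Conversely, if $s(\infty)<\infty$, pick $b_n\uparrow\infty$ and observe that the events $A_n:=\{\tau_{x,0}>\tau_{x,b_n}\}$ are decreasing, since the process moves upward only continuously and must cross $b_n$ before reaching $b_{n+1}$. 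Thus $\mathbf{P}_x(\bigcap_n A_n)=\lim_n \mathbf{P}_x(A_n)=(\kappa+s(x))/(\kappa+s(\infty))>0$. On $\bigcap_n A_n$ the non-decreasing limit $T^\ast:=\lim_n \tau_{x,b_n}$ would give $X_{T^\ast-}=\infty$ if finite, which is excluded by non-explosion of the PDMP (as discussed after Proposition \ref{prop:explos}, $I_\infty(x)=\infty$ rules out $S_\infty<\infty$). Hence $T^\ast=\infty$ on $\bigcap_n A_n$, and $\tau_{x,0}=\infty$ with positive probability.

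\textbf{From return to Harris recurrence.} Assume $s(\infty)=\infty$. Since $I_0(x)<\infty$, state $0$ is reflecting and the flow pushes the process into $(0,\infty)$ instantly, so applying the strong Markov property at a small time $\varepsilon>0$ (where $X_\varepsilon>0$) and invoking $\tau_{y,0}<\infty$ a.s.\ for every $y>0$, we conclude $\tau_{0,0}<\infty$ $\mathbf{P}_0$-a.s. The successive returns to $0$ yield an i.i.d.\ regeneration scheme, and the occupation measure of a single excursion
\[
m(A) := \mathbf{E}_0\Bigl[\int_0^{\tau_{0,0}} \mathbf{1}_A(X_s)\,ds\Bigr]
\]
is a $\sigma$-finite invariant measure. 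The usual regenerative argument then yields $\mathbf{P}_x(\int_0^\infty \mathbf{1}_A(X_s)\,ds=\infty)=1$ whenever $m(A)>0$, which is precisely Harris recurrence.

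\textbf{Invariant measure and classification.} Harris recurrence gives uniqueness of $\pi$ up to a multiplicative constant. Because the reflecting flow instantaneously pushes the process off $0$, no atom at $0$ is possible, and the functional equation derived in Section 3 in the separable case forces the explicit density \eqref{eq:speed}. Positive recurrence reduces to $\pi(\R_+)<\infty$. Under Assumption \ref{ass:gamma0} the factor $e^{-\Gamma(y)}$ stays bounded near $0$, and $h(0)>0$ together with $I_0(x)<\infty$ (equivalent to $\int_{0+}dy/\alpha(y)<\infty$) make $h(y)/\alpha(y)\cdot e^{-\Gamma(y)}$ integrable in a neighbourhood of $0$; thus $\pi$ is finite iff $\int^\infty h(y)/\alpha(y)\cdot e^{-\Gamma(y)}\,dy<\infty$, with null recurrence otherwise. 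The main obstacle is the transient half of the dichotomy, where one must rule out the hitting times $\tau_{x,b_n}$ accumulating at a finite random time on $\bigcap_n A_n$; this is exactly where $I_\infty(x)=\infty$ (hence non-explosion) is indispensable.
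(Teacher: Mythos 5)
Your proof is correct and follows essentially the same route as the paper: both directions of the dichotomy rest on Theorem \ref{prop:scale} and the monotone limit $b\to\infty$, with $I_\infty(x)=\infty$ (hence $X_t\le x_t(x)<\infty$) ruling out escape to $+\infty$ in finite time. The only cosmetic difference is in the converse, where you argue the contrapositive directly via the nested exit events $\{\tau_{x,0}>\tau_{x,b_n}\}$, while the paper phrases the same fact through the finiteness of the excursion height $\mathcal H$ and the identity $\lim_b p(0,b)=s(\infty)/(\kappa+s(\infty))$; the identification of the invariant density \eqref{eq:speed} and the positive/null classification are handled as in Section~3 of the paper.
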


\begin{proof}
Suppose $ s( \infty ) = \infty .$ 
We let $ b \to \infty $ in \eqref{eq:p0h} and notice that $ \lim_{b \to \infty } p(0, b ) = 1 $ such that
$$ \mathbf{P}\left( \tau _{x,0}<\tau _{x, \infty }\right)  = 1.$$

This implies that $\tau _{x,0} < \infty $ almost surely. 

On the other hand, suppose that the process is recurrent. It is straightforward to show that the recurrence implies that $ \tau_{0, 0 } < \infty $ almost surely (recall that $ 0$ is reflecting by assumption and that $ \beta $ is positive on $ (0, \infty) .$)  
Since $ \mathcal H \le x_{\tau_{0,0}}(0) $ and since $ I_\infty (x) = \infty, $ this implies that $ \mathcal H < \infty $ almost surely, i.e., $ \lim_{b \to \infty } {\mathbf P} ( \mathcal H < b ) =\lim_{b \to \infty}  p(0, b ) = 1 .$ Under our assumptions, this is only possible if $ s( \infty ) = \infty,$ since $ \kappa \neq 0.$   
\end{proof}

%\begin{remark}
%Notice that in the above proof we have used that the recurrence of the processus is equivalent to $H<\infty $ almost surely.
%\end{remark}

\begin{remark}
We impose all assumptions of proposition \ref{cor:14} except that now we consider the absorbing case $ I_0 (x) = \infty .$ In this case we still have that $ \tau_{x, 0 } < \infty $ almost surely if and only if $ s( \infty ) = \infty: $ the process gets
absorbed in $ 0 $ after a finite time almost surely and then stays there forever.
\end{remark}

When $h\left( x\right) =1$ (total disasters), the event $\tau _{x,b}<\tau
_{x,0}$ coincides with the event $T_{x}>I_{b}\left( x\right) $ where $%
I_{b}\left( x\right) =\int_{x}^{b}dy/\alpha \left( y\right) $ is the time
needed for the flow to reach level $b$ starting from $x$.

\begin{example}
Consider a growth model with $\alpha \left( x\right) =\alpha _{1}x^{a},$\ $%
\beta \left( x\right) =\beta _{1}$, $\gamma \left( x\right) =\gamma
_{1}x^{-a}$ and assume $h\left( x\right) =1.$\ Assuming\ $a<1$\ for which
boundary $0$\ is reflecting, then 
\[
x_{t}\left( x\right) =\left( x^{1-a}+\alpha _{1}\left( 1-a\right) t\right)
^{1/\left( 1-a\right) }=b\Rightarrow I_{b}\left( x\right) =\frac{%
b^{1-a}-x^{1-a}}{\alpha _{1}\left( 1-a\right) }.
\]
Thus, 
\begin{multline*}
{\bf P}\left( \tau _{x,b}<\tau _{x,0}\right) ={\bf P}\left(
T_{x}>I_{b}\left( x\right) \right) 
\\
={\bf P}\left( T_{x}>\frac{b^{1-a}-x^{1-a}}{\alpha _{1}\left( 1-a\right) }%
\right) =e^{-\left[ \Gamma \left( x_{t}\left( x\right) \right) -\Gamma
\left( x\right) \right] }\mid _{t=\frac{b^{1-a}-x^{1-a}}{\alpha _{1}\left(
1-a\right) }}=\frac{e^{\Gamma \left( x\right) }}{e^{\Gamma \left( b\right) }}
\end{multline*}
with $\Gamma \left( x\right) =\frac{\gamma _{1}}{1-a}x^{1-a}$. As $%
x\rightarrow 0$, 
\[
{\bf P}\left( \tau _{0,b}<\tau _{0,0}\right) ={\bf P}\left( \mathcal H\geq b\right) =%
{\bf P}\left( T_{0}>\frac{b^{1-a}}{\alpha _{1}\left( 1-a\right) }%
=I_0 ( b)  \right) =e^{-\Gamma \left( b \right) },
\]
where $\mathcal H$\ denotes the height of an excursion, which makes sense because
boundary $0$\ is reflecting and the chain is recurrent ($s\left( \infty
\right) =\infty $). So here 
\[
\mathcal H\stackrel{d}{=}\left( \alpha _{1}\left( 1-a\right) T_{0}\right) ^{1/\left(
1-a\right) },
\]
showing how height and length of excursions scale.
\end{example}

\begin{example}
Consider a growth model with $\alpha \left( x\right) =\alpha _{0}+\alpha
_{1}x$ (Malthus growth with immigration)$,$\ $\beta \left( x\right) =\beta
_{1}$, $\gamma \left( x\right) =\beta _{1}/\left( \alpha _{0}+\alpha
_{1}x\right) $ and assume $h\left( x\right) =e^{x}.$ We have 
\[
\Gamma \left( x\right) =\frac{\beta _{1}}{\alpha _{1}}\log \left( \alpha
_{0}+\alpha _{1}x\right) 
\]
satisfying Assumptions $1$ and $2$. State $0$ is reflecting and the process $%
X$ is transient at $\infty $. Here $\kappa =\frac{\beta _{1}}{\alpha _{1}}%
\log \alpha _{0}$, and 
\[
s\left( x\right) =\beta _{1}\int_{0}^{x}\left( \alpha _{0}+\alpha
_{1}y\right) ^{\beta _{1}/\alpha _{1}-1}e^{-y}dy=\frac{\beta _{1}e^{\alpha
_{0}/\alpha _{1}}}{\alpha _{1}}\int_{\alpha _{0}}^{\alpha _{0}+\alpha
_{1}x}z^{\beta _{1}/\alpha _{1}-1}e^{-z/\alpha _{1}}dz,
\]
involving an integral Gamma function. It holds that 
\[
{\bf P}\left(\mathcal  H\geq b\right) =\frac{\kappa }{\kappa +s\left( b\right) },
\]
with ${\bf P}\left(\mathcal  H=\infty \right) =\kappa /\left( \kappa +s\left( \infty
\right) \right) >0$, $s\left( \infty \right) <\infty .$\newline
\end{example}
\begin{remark}
Under the assumptions of Theorem \ref{prop:scale}, let us discuss the situation $s( \infty ) < \infty.  $ In this case we have ${\mathbf P } ( \tau_{x, 0 } < \tau_{x ,\infty } ) < 1 .$ 

Then either $ \tau_{x ,\infty} = \infty  .$ In this case  with 
positive probability the process never comes back to $0$ and thus is transient, that is, converges to $ + \infty $ as $ t \to \infty .$ 

Or $ \tau_{x ,\infty} < \infty , $ such that the process hits state $+ \infty $ even in finite time. Proposition \ref{prop:explos} implies that in this case $ S_\infty < \infty $ such that the jump chain $ Z_n = X_{S_n} $ is transient. 
However in case $ \infty $ is regular, we can add state $+\infty $ to the state space. In this particular situation the process $X_t$ is even recurrent having $ + \infty $ as recurrent state. 
\end{remark}

%\begin{remark}
%Under the assumptions of Proposition \ref{prop:scale}, let us introduce the modified generator $ { \tilde G} $ of the process $ X$ by 
%$$ \tilde G u ( x) = \alpha ( x) u'(x) - \beta ( x) / h (x) \int^x h(y) u'(y) dy ,$$
%for any smooth test function $u.$ This modified generator differs from the true generator $ G u ( x) $ defined in \eqref{eq:Gsep} only through the fact that the 
%definite integral $ \int_0^x h(y) u'(y) dy$ appearing in $Gu ( x) $ is replaced by an indefinite integral  $ \int^x h(y) u'(y) dy.$
%The function $s$ introduced in \eqref{eq:s} above satisfies 
%$$ \tilde G  s = 0,$$ 
%with boundary condition $s(0) = 0.$ 
%We shall call $s$ a {\bf modified scale function} of the process. Notice however that in general a true scale function, that is, a function transforming $ X_t $ into a martingale, does not exist.
%\end{remark}

\subsection{Classification of the recurrence/transience of state $0$ in the separable case}
We close this section with a classification of the recurrence/transience of state $0$ in the separable case with $ h(0) > 0.$ Under Assumption \ref{ass:gammainfty} and \ref{ass:gamma0}, we have :
\begin{itemize}
\item
$ s (\infty ) = \infty , I_0 ( x) < \infty $ : $0$ is recurrent, positive recurrent iff $\int^\infty \frac{h(x)}{\alpha ( x) } e^{ - \Gamma ( x)} dx < \infty .$ 
\item
$ s (\infty ) = \infty , I_0 ( x) = \infty $ : The process is transient in $0$ (almost surely hits $0$ in finite time and stays there forever).
\item
$ s (\infty ) <  \infty , I_{ \infty } (x) = \infty $ : The process is transient (converges to $+ \infty $ with positive probability).
\item
$ s (\infty ) <  \infty , I_{ \infty } (x) < \infty $ : The process is either transient (converges to $+ \infty $ with positive probability) or hits state $ \infty $ in finite time ($\tau_{x, \infty} < \infty $ with positive probability). If state $+ \infty $ is REGULAR, we can add it to the state space, and it will become a recurrent state. If  it is EXIT  the process hits $ + \infty $ in finite time and then stays there forever with positive probability.
\end{itemize}

\subsection{Expected return times to $0$}
This section is devoted to obtain an explicit formula for $ u (x) = \mathbf{E}\left( \tau _{x,0}\right) $ in the case of positive recurrence. In case of total disaster when $H\left( x,0\right) =1$ for all $x,$ we
have $\tau _{x,0}=T_{x}$ which has already been discussed. So we suppose $%
0<H\left( x,0\right) <1$ for all $x$ in this subsection. If $ x > 0, $ we have 
\begin{equation}
\tau _{x,0}\overset{d}{=}T_{x}\mathbf{1}\left( X_{T_{x}}=0\right) +\mathbf{1}%
\left( X_{T_{x}}>0\right) \left( T_{x}+\tau _{X_{T_{x}},0}^{\prime }\right) ,
\label{FSA}
\end{equation}
where $ \tau' _{X_{T_x} } $ is independent of $ {\mathcal F}_{{T_x}}  $ and distributed as  $ \tau_{X_{T_x} }  .$ 
This implies
\begin{equation*}
u(x) =\mathbf{E}\left( \tau _{x,0}\right) =\mathbf{E}\left( T_{x}\right)
+\int_{0^{+}}^{\infty }\mathbf{P}\left( X_{T_{x}}\in dy\right) \mathbf{E}%
\left( \tau _{y,0}\right) , \; x > 0 ,
\end{equation*}
where we recall that 
\begin{equation}\label{eq:u0}
u_{0}\left( x\right) =\mathbf{E}\left( T_{x}\right) =e^{\Gamma \left(
x\right) }\int_{x}^{\infty }\frac{dz}{\alpha \left( z\right) }e^{-\Gamma
\left( z\right) }.
\end{equation}
Imposing Assumption \ref{ass:gammainfty} and \ref{ass:gamma0} and  moreover that  $u_{0}\left( x\right) <\infty ,$
$u_0$ solves
\begin{equation*}
\alpha \left( x\right) u_{0}^{\prime }\left( x\right) -\beta \left( x\right)
u_{0}\left( x\right) =-1, \mbox{ with  }
u_{0}\left( 0\right) =e^{\Gamma ( 0 ) } \int_{0}^{\infty }\frac{dz}{\alpha \left( z\right) }e^{-\Gamma
\left( z\right) },
\end{equation*}
which is finite under Assumption \ref{ass:gamma0},
if $0$ is reflecting. 

In what follows we shall always choose $ \Gamma ( 0 ) = 0 ,$ and we also impose

\begin{ass}\label{ass:more}
1. $X$ is positive recurrent having $0$ as recurrent point. In particular, $H(x, 0 ) > 0 $ for some $ x > 0 $ and $0$ is reflecting, that is $ I_0 ( x) < \infty.$\\
2. The function $ \R_+ \ni x \mapsto \int_0^x  g(y) \bar H(x, dy ) $ is continuous for all bounded measurable test functions $g.$ 
\end{ass}

\begin{proposition}\label{prop:taux}
Suppose that  Assumptions \ref{ass:gammainfty}, \ref{ass:gamma0} and \ref{ass:more} hold.
Suppose moreover that 
$u\left( x\right) =\mathbf{E}\left( \tau _{x,0}\right) $ is locally bounded, that is, $\sup \{  u(y) ,0 \le  y \le x \} < \infty $ for all $x > 0.$ Then $u \in C^1 ( (0, \infty )) ,$ and it solves
\begin{equation}
  {\mathcal G}u  \left( x\right)  =-1 \mbox{  on   } (0 ,\infty) ,   \label{tau1}
\end{equation}
where for all $ x > 0 , $ 
$$  {\mathcal G}u (x)   = \alpha ( x) u'(x) - \beta ( x) H(x, 0 ) u (x) + \beta (x) \int_{0+}^x  \bar H ( x, dy ) [u (y) - u(x) ] . $$
 \end{proposition}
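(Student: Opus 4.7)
The plan is a first-step analysis at the first jump time $T_x$, coupled with the explicit joint law of $(T_x, X_{T_x})$ derived in Section~2.5. Starting from \eqref{FSA} and the strong Markov property at $T_x$, I would write
\begin{equation*}
u(x)=\mathbf{E}(T_x)+\int_{0+}^{\infty}\mathbf{P}(X_{T_x}\in dy)\,u(y),
\end{equation*}
where the integral excludes the atom at $0$ since on $\{X_{T_x}=0\}$ one already has $\tau_{x,0}=T_x$. Substituting the explicit expression for $\mathbf{P}(X_{T_x}\in dy)$ from Section~2.5 and formula \eqref{eq:u0} for $\mathbf{E}(T_x)$, this becomes
\begin{equation*}
u(x)=e^{\Gamma(x)}\int_x^{\infty}e^{-\Gamma(z)}\left[\frac{1}{\alpha(z)}+\gamma(z)\int_{0+}^{z}\bar H(z,dy)\,u(y)\right]dz.
\end{equation*}

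Next I would set $\phi(x):=e^{-\Gamma(x)}u(x)$, so that $\phi(x)=\int_x^{\infty}F(z)\,dz$ with integrand
\begin{equation*}
F(z):=e^{-\Gamma(z)}\left[\frac{1}{\alpha(z)}+\gamma(z)\int_{0+}^{z}\bar H(z,dy)\,u(y)\right].
\end{equation*}
The crux is to show that $F$ is continuous on $(0,\infty)$; the only nontrivial point is continuity of $z\mapsto\int_{0+}^{z}u(y)\bar H(z,dy)$. For this, fix $z_0>0$ and note that local boundedness of $u$ yields a finite constant $M:=\sup_{y\in[0,z_0+1]}u(y)$, so that $g:=u\cdot\mathbf{1}_{[0,z_0+1]}$ is bounded measurable and $\int_{0+}^{z}u(y)\bar H(z,dy)=\int_0^{z}g(y)\bar H(z,dy)$ for $z$ in a neighbourhood of $z_0$. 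Assumption~\ref{ass:more}(2) then gives continuity at $z_0$. Thus $F\in C((0,\infty))$, whence $\phi\in C^1((0,\infty))$ with $\phi'(x)=-F(x)$, and consequently $u=e^{\Gamma}\phi\in C^1((0,\infty))$.

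Finally, differentiating $u=e^{\Gamma}\phi$ and using $\phi'=-F$ yields
\begin{equation*}
u'(x)=\gamma(x)u(x)-\gamma(x)\int_{0+}^{x}\bar H(x,dy)\,u(y)-\frac{1}{\alpha(x)}.
\end{equation*}
Multiplying by $\alpha(x)$ and invoking $\int_{0+}^{x}\bar H(x,dy)=1-H(x,0)$ reorganizes the result directly into $\mathcal{G}u(x)=-1$.

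The main obstacle will be the continuity of $z\mapsto\int_{0+}^{z}u(y)\bar H(z,dy)$; the rest is a direct unwinding of the first-step equation. This is precisely where Assumption~\ref{ass:more}(2) together with local boundedness of $u$ is used.
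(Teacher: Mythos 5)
Your proposal is correct and follows essentially the same route as the paper: a first-step decomposition at $T_x$ using the explicit law of $X_{T_x}$, continuity of the integrand (via Assumption \ref{ass:more}(2) and local boundedness of $u$) to justify differentiability, then differentiation and the identity $u(x)-\int_{0+}^{x}\bar H(x,dy)u(y)=H(x,0)u(x)+\int_{0+}^{x}\bar H(x,dy)[u(x)-u(y)]$ to arrive at $\mathcal Gu=-1$. The substitution $\phi=e^{-\Gamma}u$ is only a minor bookkeeping variant, and your explicit treatment of the continuity point is if anything slightly more careful than the paper's.
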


Notice that $ u (0 +) := \lim_{x \to 0} u(x) \neq 0 ,$ implying that in general, $ G u ( x) -  {\mathcal G}u (x)  = \beta ( x) H(x, 0 ) u (0 ) \neq 0. $

\begin{proof}
From (\ref{FSA}), we have
\begin{equation*}
\mathbf{E}\left( \tau _{x,0}\right) =\mathbf{E}\left( T_{x}\right)
+\int_{0^{+}}^{\infty }\mathbf{P}\left( X_{T_{x}}\in dy\right) \mathbf{E}%
\left( \tau _{y,0}\right).
\end{equation*}
If $y>0$\textbf{, }$\mathbf{P}\left( X_{T_{x}}\in dy\right)
=\int_{x}^{\infty }dz\gamma \left( z\right) e^{-\int_{x}^{z}\gamma \left(
z^{\prime }\right) dz^{\prime }}\overline{H}\left( z,dy\right) .$
Therefore

\begin{eqnarray*}
\mathbf{E}\left( \tau _{x,0}\right) &=&\mathbf{E}\left( T_{x}\right)
+\int_{x}^{\infty }dz\gamma \left( z\right) e^{-\int_{x}^{z}\gamma \left(
z^{\prime }\right) dz^{\prime }}\int_{0+}^{z}\overline{H}\left( z,dy\right) 
\mathbf{E}\left( \tau _{y,0}\right) \\
&=&u_0 ( x) +e^{\Gamma \left( x\right)
}\int_{x}^{\infty }dz\gamma \left( z\right) e^{-\Gamma \left( z\right)
}\int_{0+}^{z}\overline{H}\left( z,dy\right) \mathbf{E}\left( \tau
_{y,0}\right),
\end{eqnarray*}
where $u_0$ is given in \eqref{eq:u0} and differentiable on $ (0, \infty ).$ 

Since $ z \mapsto  \gamma \left( z\right) e^{-\Gamma \left( z\right)
}\int_{0}^{z}{H}\left( z,dy\right) \mathbf{E}\left( \tau
_{y,0}\right) $ is continuous, 
$u\left( x\right) =\mathbf{E}\left( \tau _{x,0}\right) $ is differentiable on $ (0, \infty )$ and 
obeys 
\begin{equation*}
u^{\prime }\left( x\right) =u_{0}^{\prime }\left( x\right) +\gamma \left(
x\right) \left( u\left( x\right) -u_{0}\left( x\right) \right) -\gamma
\left( x\right) \int_{0+}^{x} \overline{H}\left( x,dy\right) u\left( y\right).
\end{equation*}
Recalling $u_{0}^{\prime }\left( x\right) =\gamma \left( x\right)
u_{0}\left( x\right) -1/\alpha \left( x\right) ,$ this is  
\begin{equation}\label{eq:12}
u^{\prime }\left( x\right) =-1/\alpha \left( x\right) +\gamma \left(
x\right) \left[ u\left( x\right) -\int_{0+}^{x} \overline{H}\left( x,dy\right)
u\left( y\right) \right] .
\end{equation}
Finally, 
$$
 u\left( x\right) -\int_{0+}^{x} \overline{H}\left( x,dy\right)
u\left( y\right)  = H(x, 0) u(x) + \int_{0+}^x \bar H (x, dy ) ( u(x) - u(y) ),
$$
which implies the assertion.
\end{proof}

%Notice that the assumptions imposed in the statement of Proposition \ref{prop:taux} are not always satisfied, as shows the following 
%\begin{corollary}\label{cor:3}
%Under the assumptions of Proposition \ref{prop:taux} and if $ \beta (0) u(0) \neq 1, $ state  $ 0$ can not be absorbing, that is, $ \int_0 1/ \alpha ( y ) dy < \infty .$ 
%\end{corollary}
%The proof of this statement is postponed to the appendix. 

%{\color{red}
%\begin{remark}
%Grant the assumptions of Proposition \ref{prop:taux}. Then either $ \sup_{ x } u(x) < \infty ,$ in which case the process is uniformly recurrent. Or $ \lim_{ x \to \infty } u(x) = \infty .$ 
%In this case let us pose $ V ( x) := 1 + u (x) $ for all $ x > 0 $ and $ V(0) := 1;$ that is, we replace the value of $ u(0) $ by $0.$ Then we have that 
%$$ G V ( x) = {\mathcal G} u (x)   = - 1   , $$
%which means that $ V$ is a Lyapunov function in the sense of Meyn and Tweedie \cite{mt}.
%\end{remark}
%}
%Under the assumptions of Proposition \ref{prop:taux}, the process $ X_t $ is necessarily positively recurrent, coming back to $0 $ after a finite time almost surely. Let $p > 0 .$  
%
%\begin{definition}
%We say that $ X$ is $p-$recurrent if there exists a constant $C> 0 $ such that 
%$$ u(x) =  \mathbf{E}\left( \tau _{x,0}\right)   \le C x^p $$ 
%for all $ x \geq   1 .$ 
%\end{definition}

In what follows, $\pi (y ) $ designs the speed density with integration constant $C $ introduced in \eqref{eq:speed} above. By our assumptions, $\pi (y ) $ is integrable. We also recall the definition of the modified scale function $ s $ in \eqref{eq:s}. We suppose that $H$ is separable with $h(0) > 0.$ In this case it is possible to obtain an explicit formula for $ u (x) $ as we shall show now. 

We start with the following first observation that allows us to determine value of $ u$ in $0,$ $ u(0+ ) := \lim_{x \to 0 } u(x)= \mathbf{E} ( \tau_{0, 0 } ).$ 

\begin{theorem}\label{prop:u(0)}
Grant the assumptions of Proposition \ref{prop:taux}. Let $ \pi $ the unique invariant measure given in \eqref{eq:speed}, where the constant $C$ is chosen such that $ \pi $ is tuned to a probability.  Then for any Borel subset $ B $ of $ \R_+, $ 
\begin{equation}\label{eq:af}
 \pi ( B ) = \frac{1}{u(0+)} \mathbf{E}_0 \int_0^{\tau_{0, 0 } } 1_B ( X_s ) ds .
\end{equation} 
Suppose now moreover that $\pi ( \beta ) \in (0, \infty) $ and that $ \alpha ( 0 ) > 0, $ then
\begin{equation}\label{eq:uin0}
 \mathbf{E} ( \tau_{0, 0 } ) = u(0 +) = \frac{1}{C h(0) }  .
\end{equation} 
\end{theorem}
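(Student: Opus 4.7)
The plan is to prove the two identities sequentially.

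For the first identity \eqref{eq:af}, I would exploit the regenerative structure induced by successive returns to the recurrent point $0$. Under the standing assumptions, $\tau_{0,0} < \infty$ $\mathbf{P}_0$-a.s., so a direct application of the strong Markov property shows that the measure $\nu(B) := \mathbf{E}_0 \int_0^{\tau_{0,0}} \mathbf{1}_B(X_s)\,ds$ is $\sigma$-finite and invariant for the semigroup of $X$; this is the classical occupation-time formula for regenerative processes (see \cite{ADR} and the references therein). By Harris recurrence the invariant measure is unique up to a positive scalar, hence $\nu = u(0+)\,\pi$. Dividing by $u(0+) < \infty$, which is finite by positive recurrence, and using that $\pi$ has been normalised to a probability, gives \eqref{eq:af}.

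For the second identity \eqref{eq:uin0}, the key idea is to evaluate the occupation-time formula \eqref{eq:af} at the specific non-negative test function
\begin{equation*}
f(x) \;:=\; \beta(x)\,H(x,0) \;=\; \beta(x)\,\frac{h(0)}{h(x)},
\end{equation*}
which is the infinitesimal rate at which $X$ jumps from state $x$ directly onto $\{0\}$. By Campbell's formula applied to the compensator of the jump measure of $X$, the quantity
\begin{equation*}
\mathbf{E}_0 \int_0^{\tau_{0,0}} \beta(X_s)\,H(X_s,0)\,ds
\end{equation*}
equals the $\mathbf{P}_0$-expected number of jumps of $X$ landing on $0$ during one excursion. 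Since an excursion terminates by definition at the first such jump, this expected number equals $1$, and the right-hand side of \eqref{eq:af} applied to $f$ collapses to $1/u(0+)$.

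On the left-hand side, the explicit form of the invariant density \eqref{eq:speed} yields
\begin{equation*}
\pi(f) \;=\; \int_0^\infty \beta(y)\,\frac{h(0)}{h(y)}\cdot C\,\frac{h(y)}{\alpha(y)}\,e^{-\Gamma(y)}\,dy \;=\; C\,h(0)\int_0^\infty \gamma(y)\,e^{-\Gamma(y)}\,dy \;=\; C\,h(0),
\end{equation*}
where I used $\gamma\,e^{-\Gamma} = -(e^{-\Gamma})'$ together with $\Gamma(0)=0$ and $\Gamma(\infty) = \infty$ (Assumption~\ref{ass:gammainfty}). Equating the two sides yields \eqref{eq:uin0}. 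The main technical step to carry out carefully is the extension of \eqref{eq:af} from bounded indicators to the possibly unbounded non-negative test function $f = \beta H(\cdot,0)$; this is done by monotone truncation once one has observed that $\pi(f) = C h(0) < \infty$. The roles of the additional hypotheses are as follows: $\pi(\beta) \in (0,\infty)$ controls the expected jump rate under $\pi$ and secures non-explosion (cf.\ Proposition~\ref{prop:7}), while $\alpha(0) > 0$ ensures that $0$ is regularly left by the deterministic flow, so that the Campbell identification of the expected number of jumps landing at $0$ is rigorously valid.
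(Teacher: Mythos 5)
Your proof of \eqref{eq:af} coincides with the paper's (both invoke the classical occupation-time identity for the regenerative cycles between visits to $0$), but your derivation of \eqref{eq:uin0} is genuinely different from, and considerably shorter than, the paper's. The paper applies \eqref{eq:af} to $B=[0,\varepsilon]$, divides by $\varepsilon$ and lets $\varepsilon\to 0$: the left-hand side tends to $\pi(0)=Ch(0)/\alpha(0)$, while identifying the limit of the right-hand side as $1/(u(0+)\alpha(0))$ requires showing that the expected occupation time of $[0,\varepsilon]$ during one excursion equals $e^{-\Gamma(\varepsilon)}I_0(\varepsilon)+o(\varepsilon)$; controlling the remainder is the bulk of the paper's proof and uses the strong Feller property of the jump chain together with the theory of special functions for Harris chains. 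You instead test $\pi$ against $f=\beta(\cdot)H(\cdot,0)$, the rate of jumping directly onto $\{0\}$: the compensator identity for the jump measure (valid for nonnegative predictable integrands at any stopping time, with no localization issues) gives $\mathbf{E}_0\int_0^{\tau_{0,0}}\beta(X_s)H(X_s,0)\,ds=\mathbf{E}_0\bigl[\#\{\text{jumps onto }0\text{ in }(0,\tau_{0,0}]\}\bigr]=1$, since by construction each excursion contains exactly one such jump, namely the terminal one; and $\pi(f)=Ch(0)\int_0^\infty\gamma(y)e^{-\Gamma(y)}dy=Ch(0)$ by $\Gamma(0)=0$ and Assumption~\ref{ass:gammainfty}. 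This bypasses the entire remainder analysis and, as a bonus, never uses $\alpha(0)>0$ (which the paper needs only to make sense of the pointwise value $\pi(0)$); your closing remark attributing a role to $\alpha(0)>0$ in validating the Campbell identification is therefore not quite right, but it is a harmless over-caution rather than a gap. The extension of \eqref{eq:af} from indicators to the nonnegative function $f$ by monotone convergence is routine, as you note. Both arguments are correct; yours trades the paper's delicate boundary local-time analysis for an elementary counting identity, at the cost of being specific to the separable structure of $H$ (which the theorem assumes anyway).
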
 

\begin{proof}
Representation \eqref{eq:af} is classical and follows from decomposing the trajectory of $ X$ into successive excursions out of $0$ (see e.g. Proposition 2.8 in \cite{DEO}). 
Applying \eqref{eq:af} with $B = [0, \varepsilon] ,$ we obtain 
$$ \frac{1}{ \varepsilon } \int_0^\varepsilon \pi ( y )dy = \frac{1}{u(0+)} \mathbf{E}_0 \left( \frac{1}{ \varepsilon } \int_0^{\tau_{0, 0 }} 1_{ \{X_s \le \varepsilon\} } ds \right) .$$
Letting $ \varepsilon \to 0, $ clearly the left hand side converges to $  \pi (0)  = C (h(0) / \alpha ( 0 )) e^{ - \Gamma ( 0 ) }= C h(0) / \alpha ( 0 ),$ since we have chosen $ \Gamma ( 0 ) = 0.$  The remainder of the proof is devoted to show that 
$$ \lim_{ \varepsilon \to 0} \mathbf{E}_0 \left( \frac{1}{ \varepsilon } \int_0^{\tau_{0, 0 }} 1_{\{ X_s \le \varepsilon\} } ds \right) = 1 / \alpha ( 0 ) .$$ 
Clearly, 
\begin{equation}\label{eq:rem}
  \mathbf{E}_0  \int_0^{\tau_{0, 0 }} 1_{\{ X_s \le \varepsilon\} } ds  = \mathbf {P} ( T_0 > I_0 ( \varepsilon) ) \cdot I_0 ( \varepsilon)  + R ( \varepsilon ) , 
\end{equation}  
with $ I_0 ( \varepsilon ) = \int_0^\varepsilon \frac{1}{ \alpha ( y ) } dy $ the time needed for the deterministic flow to reach $ \varepsilon, $ starting from $0.$ 
In the sequel we will show that the remainder term $R ( \varepsilon ) $ is actually of the order $ R ( \varepsilon ) = o ( \varepsilon).$ Then the assertion follows from 
$$  \lim_{ \varepsilon \to 0}  \frac{1}{ \varepsilon }  \mathbf{P} ( T_0 > I_0 ( \varepsilon) ) \cdot I_0 ( \varepsilon)  =  \lim_{ \varepsilon \to 0}  \frac{1}{ \varepsilon } e^{ - \Gamma ( \varepsilon) } \int_0^\varepsilon \frac{1}{ \alpha ( y ) } dy = \frac{1}{ \alpha ( 0 ) }.$$ 

{\bf Step 1.} In what follows, we shall rely on the fact that for any $ y \geq \varepsilon, $ we have that
\begin{equation}\label{eq:nice}
 \mathbf{E}_y \int_0^{\tau_{y, 0}} 1_{\{ 0 < X_s \le \varepsilon\}} ds = o ( \varepsilon ).
\end{equation}
Indeed, 
\begin{multline}\label{eq:nice2}
 \mathbf{E}_y \int_0^{\tau_{y, 0}} 1_{\{ 0 < X_s \le \varepsilon\}} ds 
 = \sum_{n \geq 1 } \mathbf{E}_y \left( 1_{\{ S_n < \tau_{y, 0 }\}} 1_{\{ X_{S_n} \le  \varepsilon \}}( I_{X_{S_n}} ( \varepsilon ) \wedge (S_{n+1} - S_n) ) \right) \\
 \le I_0 ( \varepsilon ) \sum_{n \geq 1 } \mathbf{E}_y \left( 1_{\{ S_n < \tau_{y, 0 }\}} 1_{\{ X_{S_n} \le  \varepsilon \}}\right)= I_0 ( \varepsilon )  \mathbf{E}_y \left( \sum_{n=1}^{\tau_{y, 0 }- 1} 1_{\{ Z_n \le  \varepsilon \}} \right) .
\end{multline} 
Since $ \pi ( \beta ) < \infty, $ $(Z_n)_n $ is a positive Harris recurrent strong Feller chain (recall Proposition \ref{prop:strongfeller}). Being strong Feller, every bounded measurable function $f$  having compact support is a {\it special function} (see \cite{Revuz}, exercise 4.11, chapter 6, page 215). This means that for any function $h$ such that $ \pi^Z ( h) > 0 , $ the function 
$$ x \mapsto \mathbf{E}_x \left( \sum_{n=1}^\infty ( 1 - h ( Z_1) ) \cdot \ldots \cdot (1 - h ( Z_{n- 1}))  f ( Z_n) \right) $$
is bounded. Taking $ f = 1_{ ( 0 , 1 ]} $ (which, being of compact support, is therefore a special function) and $ h = 1_{ \{ 0\}} $ (which satisfies $ \pi^Z ( h) > 0 $ since $Z_n$ is recurrent coming back to $0$ infinitely often almost surely) we obtain that 
$$ x \mapsto \mathbf{E}_x \sum_{n=1}^{\tau_{x, 0} } 1_{\{Z_n \le 1 \}} \mbox{ is bounded,} $$
implying the assertion by dominated convergence. 

{\bf Step 2.} We now treat the remainder term  $ R (\varepsilon ) = R_1 ( \varepsilon) +R_2 ( \varepsilon) $ appearing in \eqref{eq:rem},  where 
$$ R_1 ( \varepsilon) = \mathbf{E}_0 \left( 1_{\{ T_0 > I_\varepsilon (0 ) \}}  \int_{I_\varepsilon (0) }^{\tau_{0, 0} } 1_{\{ X_s \le \varepsilon\}} ds \right) \mbox{
and }
R_2 ( \varepsilon) = \mathbf{E}_0 \left( 1_{\{ T_0 \le  I_\varepsilon (0 ) \}}  \int_0^{\tau_{0, 0} } 1_{\{ X_s \le \varepsilon\}} ds \right) .$$

Observe that 
\begin{multline*}
R_2 ( \varepsilon ) = \int_0^{I_0 ( \varepsilon )} \beta ( x_t (0) ) e^{ - \int_0^t \beta ( x_s (0) ) ds }dt \\
 \left( t + \int_0^{x_t(0) } \bar H ( x_t (0) , dy ) \mathbf{E}_y \int_0^{\tau_{y, 0 } } 1_{\{ 0 < X_u \le \varepsilon \}} \right)\\
  \le
 \int_0^\varepsilon \gamma ( x) e^{ - (\Gamma ( x) - \Gamma (0 ) )}  \left( I_0 (\varepsilon) + \int_0^{x } \bar H ( x , dy ) \mathbf{E}_y \int_0^{\tau_{y, 0 } } 1_{\{ 0 < X_u \le \varepsilon \}} \right) dx= O ( \varepsilon^2 ) ,
\end{multline*}
since $t \le I_0 ( \varepsilon).$

Concerning the first remainder term, we first use that by 
the Markov property,
$$  R_1 ( \varepsilon) = e^{ - \Gamma ( \varepsilon) } \mathbf{E}_\varepsilon \int_0^{\tau_{\varepsilon, 0 }} 1_{\{ 0 < X_s \le \varepsilon\}} ds .$$ 
We consider three different events. 

We say that event $E_1$  is realized  when the first jump of the process leads to an after-jump position $y \le \varepsilon $ while the second jump of the process happens after the process has reached $\varepsilon.$ 

We say that event $E_2$  is realized  when the first jump of the process leads to an after-jump position $y \le \varepsilon $ and the second jump of the process happens before the process reaches $\varepsilon$ again.
 
We say that event $ E_3$ is realized when  the first jump of the process leads to an after-jump position $y >  \varepsilon . $

Clearly, 
\begin{multline*}
e^{ - \Gamma ( \varepsilon) }  \mathbf{E}_\varepsilon 1_{E_1} \int_0^{\tau_{\varepsilon, 0 }} 1_{\{ 0 < X_s \le \varepsilon\}} ds =
\int_\varepsilon^\infty \gamma ( z) e^{ - \Gamma ( z)  } dz \int_{0+}^\varepsilon \bar H (z, d y ) e^{ - (\Gamma ( \varepsilon) - \Gamma ( y)) } \\
\left(  \int_y^\varepsilon \frac{1}{\alpha (t) } dt + E_\varepsilon \int_0^{\tau_{\varepsilon, 0}} 1_{\{ 0 < X_s \le \varepsilon\}} \right) 
= o ( \varepsilon)  
\end{multline*}
under our hypotheses (where we have used \eqref{eq:nice}).

Similar arguments as those used to control $ R_2 ( \varepsilon ) $ show that 
\begin{multline*}
e^{ - \Gamma ( \varepsilon) }  \mathbf{E}_\varepsilon 1_{E_2} \int_0^{\tau_{\varepsilon, 0 }} 1_{\{ 0 < X_s \le \varepsilon\}} ds \le 
\int_\varepsilon^\infty \gamma ( z) e^{ - \Gamma ( z)  } dz \int_{0+}^\varepsilon \bar H (z, d y ) \int_y^\varepsilon \gamma ( z') e^{ - (\Gamma ( z') - \Gamma (y )) } dz' \\
\left( I_0 (\varepsilon) + \int_{0+}^{z'} \bar H ( z', du ) \mathbf{E}_u \int_0^{\tau_{u, 0}} 1_{\{ 0 < X_s \le \varepsilon \}} ds \right) 
= O ( \varepsilon^2 ).
\end{multline*}
Moreover, using \eqref{eq:nice2},
\begin{multline*}
e^{ - \Gamma ( \varepsilon)}  \mathbf{E}_\varepsilon 1_{E_3} \int_0^{\tau_{\varepsilon, 0 }} 1_{\{ 0 < X_s \le \varepsilon\}} ds \le 
I_0 ( \varepsilon)  \mathbf{E}_\varepsilon \left( 1_{\{ Z_1 \geq \varepsilon \}} \mathbf{E}_{Z_1} ( \sum_{n=1}^{\tau_{Z_1, 0}} 1_{\{ Z_n \le \varepsilon\}} ) \right) \\
\le I_0 ( \varepsilon)  \mathbf{E}_\varepsilon \left(  \sum_{n=1}^{\tau_{Z_1, 0}} 1_{\{ Z_n \le \varepsilon\}}  \right) =  I_0 ( \varepsilon) O ( \varepsilon) .
\end{multline*}
All in all we have shown that $  R ( \varepsilon) = o ( \varepsilon )$ which concludes the proof. 

\end{proof}

\begin{theorem}\label{cor:explicit}
Grant the assumptions of Proposition \ref{prop:taux} together with $ \pi ( \beta ) < \infty, $  and suppose that $ H(x, y ) = h(y)/h(x) , $ where $h$ is differentiable, non-decreasing, with $ h(0) > 0 $ and $\alpha ( 0 ) > 0.$  We choose $ \Gamma ( 0 ) = 0.$ 
%We suppose moreover that 
%\begin{equation}\label{eq:assw}
%\lim_{x \to \infty} e^{ - \Gamma ( x) } u(x) h(x)  = 0.
%\end{equation}
%
Then $u ( x) $ is given by 
\begin{eqnarray}\label{eq:19}
 u\left( x\right) &=&u(0) + \int_0^{x}dy\frac{%
\gamma \left( y\right) e^{\Gamma \left( y\right) }}{h\left( y\right) }%
\int_{y}^{\infty }e^{-\Gamma \left( z\right) }\frac{h\left( z\right) }{%
\alpha \left( z\right) }dz
-\int_0^{x}\frac{1}{\alpha \left( y\right) }dy \nonumber \\
&=& u(0) + s( x) \int_x^\infty \pi ( y ) dy + \int_0^x s(y) \pi (y ) dy -\int_0^{x}\frac{1}{\alpha \left( y\right) }dy ,
\end{eqnarray}
where $u(0) $ is given by \eqref{eq:uin0}.
\end{theorem}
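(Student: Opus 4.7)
The plan is to solve the integro-differential equation $\mathcal{G} u(x) = -1$ established in Proposition \ref{prop:taux}, specialized to the separable kernel $H(x,y) = h(y)/h(x)$. Since $h$ is differentiable, we have $\bar H(x, dy) = (h'(y)/h(x))\, dy$ on $(0,x]$, so expanding the integral term in $\mathcal{G}u(x)$ and performing one integration by parts (or, equivalently, using \eqref{eq:Gsep} together with the identity $Gu(x) - \mathcal{G}u(x) = \beta(x) H(x,0)\, u(0)$) reduces the equation to
\begin{equation*}
\alpha(x) u'(x) - \frac{\beta(x)}{h(x)} \int_0^x u'(y) h(y)\, dy = -1 + \frac{\beta(x) h(0)}{h(x)}\, u(0).
\end{equation*}

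Next, I would introduce the auxiliary function $v(x) := \int_0^x u'(y) h(y)\, dy$, so that $v(0) = 0$ and $v'(x) = u'(x) h(x)$. Multiplying the previous equation by $h(x)/\alpha(x)$ turns it into the first-order linear ODE
\begin{equation*}
v'(x) - \gamma(x) v(x) = -\frac{h(x)}{\alpha(x)} + \gamma(x) h(0) u(0),
\end{equation*}
which I would solve using the integrating factor $e^{-\Gamma(x)}$ (with the convention $\Gamma(0) = 0$) and integrating from $0$ to $x$. The initial condition $v(0) = 0$ then determines $v$ in terms of the still unknown constant $u(0)$.

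To close the formula, I substitute $u(0) = 1/(C h(0))$ supplied by Theorem \ref{prop:u(0)}, together with the normalization identity $\int_0^\infty (h(y)/\alpha(y)) e^{-\Gamma(y)}\, dy = 1/C$. After simplification this collapses to
\begin{equation*}
v(x) = e^{\Gamma(x)} \int_x^\infty \frac{h(y)}{\alpha(y)} e^{-\Gamma(y)}\, dy - \frac{1}{C}.
\end{equation*}
Differentiating and dividing by $h(x)$ recovers $u'(x)$; a second integration from $0$ to $x$ then produces the first equality in \eqref{eq:19}. For the second equality, I would integrate by parts in the iterated integral, using $s'(y) = \gamma(y) e^{\Gamma(y)}/h(y)$, the boundary value $s(0) = 0$, and the identification $h(y) e^{-\Gamma(y)}/\alpha(y) = \pi(y)/C$.

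The main obstacle is the reduction to an ODE: the operator $\mathcal{G}$ is non-local, and it is not a priori obvious that the change of unknown $v(x) = \int_0^x u'(y) h(y)\, dy$ linearizes everything. What makes the scheme work is precisely the separability of $H$: the integral $\int_{0+}^x \bar H(x,dy)\,[u(y) - u(x)]$ collapses, after integration by parts, to a multiple of $v(x)/h(x)$ plus a boundary term in $u(0)$, thereby turning the non-local equation into a local ODE in $v$. Once this observation is made, everything else is elementary, with the genuine probabilistic input coming from the explicit value of $u(0)$ furnished by Theorem \ref{prop:u(0)}.
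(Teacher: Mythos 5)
Your proposal is correct and follows essentially the same route as the paper: the separability of $H$ plus one integration by parts reduces $\mathcal{G}u=-1$ to the first-order linear ODE $v'-\gamma v=-h/\alpha+\gamma h(0)u(0)$ for $v(x)=\int_0^x h\,u'$, which is solved with the integrating factor $e^{-\Gamma}$ and closed using $u(0)=1/(Ch(0))$ from Theorem \ref{prop:u(0)}. The only (immaterial) difference is that you integrate forward from $0$ using $v(0)=0$, whereas the paper absorbs $h(0)u(0)$ into $v$, integrates from $\infty$, and then identifies the constant $w(\infty)=0$ by evaluating at $0$.
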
 

\begin{proof}
We come back to \eqref{eq:12} and 
we put $ \bar h ( y) = h(y ) - h(0)  .$ Using Fubini's theorem and the fact that for $ y > 0, u(x) - u(y) = \int_y^x u' (z) dz, $ since $ u$ differentiable on $ (0, \infty ) ,  $
\begin{eqnarray*} 
u\left( x\right) -\int_{0+}^{x} \overline{H}\left( x,dy\right)
u\left( y\right) 
&=&
 H(x, 0 ) u(x) + \int_{0+}^x (u (x) - u(y) ) \bar H ( x, dy ) \\
&=& H(x, 0 ) u(x) + \int_{0}^x \bar H ( x, y ) u' (y) dy \\
&=&  \frac{h(0) }{h(x)} u(x) + \frac{1}{h(x)} \int_0^x \bar h (  y ) u' (y)dy \\
&=&  \frac{h(0) }{h(x)} u(0) + \frac{1}{h(x)} \int_0^x  h (  y ) u' (y)dy .
\end{eqnarray*}
Therefore, $u$ solves
$$ \alpha ( x) u' ( x) - \frac{\beta (x)}{h(x) } \int_0^x h(y) u' (y) dy    - \frac{\beta (x)}{h(x) }h(0) u(0) = - 1  $$
on $ (0, \infty ) .$

Put $ v (x) = \int_0^x h(y) u' (y) dy  + h(0) u(0)  , $ for $ x > 0 .$  Using integration by parts and the fact that $ h' u \geq 0, $ we obtain that $ v (x) \le h( x) u ( x) < \infty$ for all $x.$ Moreover,  $ v' (x) = h(x) u'(x) $ and $v(0) = h(0) u (0) ,$ and thus 
\begin{equation}\label{eq:vprime}
 v'(x) - \gamma (x) v(x) = - \frac{h(x) }{ \alpha ( x) }  .
\end{equation} 
Putting $ w ( x) := e^{ - \Gamma ( x) } v(x), $ $ w (x) < \infty, $ since $ v(x) < \infty, $ we have 
$$ w'  ( x) = - e^{ - \Gamma( x) } \frac{h(x)}{\alpha ( x) } = -\frac{1}{C} \pi ( x)  < 0 ,$$
where $ \pi $ is the speed density given in \eqref{eq:speed}.

By our assumptions, $ \pi ,$ and hence $ w',$ is integrable on $ \R_+ $ implying that the  explicit solution of the above equation is given by
\begin{equation}\label{eq:w1}
 w ( x) = w(\infty) + \int_{x}^{\infty }e^{-\Gamma
\left( y\right) }\frac{h\left( y\right) }{\alpha \left( y\right) }dy, 
\end{equation}
for some finite constant $ w (\infty ) ,$ 
so that 
\begin{equation}\label{eq:v1}
v\left( x\right) =e^{\Gamma \left( x\right) }\int_{x}^{\infty }e^{-\Gamma
\left( y\right) }\frac{h\left( y\right) }{\alpha \left( y\right) }dy + e^{\Gamma ( x) } w( \infty ) .
\end{equation}
Since by \eqref{eq:vprime}
$$
\frac{v^{\prime }\left( x\right) }{h\left( x\right) } =\gamma \left(
x\right) \frac{v\left( x\right) }{h\left( x\right) }-\frac{1}{\alpha \left(
x\right) }=u^{\prime }\left( x\right) ,$$
this implies
$$u\left( x\right) = u(0) + \int_0^{x}u^{\prime }\left( y\right) dy
=u(0) + \int_0^{x}dy\frac{%
\gamma \left( y\right) e^{\Gamma \left( y\right) }}{h\left( y\right) }%
\left[ \int_{y}^{\infty }e^{-\Gamma \left( z\right) }\frac{h\left( z\right) }{%
\alpha \left( z\right) }dz + w ( \infty ) \right] 
  -\int_0^{x}\frac{1}{\alpha \left( y\right) }dy.
$$
The value of $w ( \infty )  $ is deduced from the fact that on the one hand 
$$ w( 0 ) = w( \infty ) + \int_0^\infty e^{-\Gamma
\left( y\right) }\frac{h\left( y\right) }{\alpha \left( y\right) }dy = w ( \infty ) + \frac{1}{C}$$
and on the other hand
$$ w (0 ) = e^{- \Gamma (0)} v(0) =  h(0 ) u(0) .$$
Replacing $u(0) $ by its explicit value given in \eqref{eq:uin0}, we obtain from this that
$ w (0 ) = \frac{1}{ C}, $
whence $ w( \infty ) = 0, $ which implies the assertion. 
\end{proof}

\begin{example}
Let $h\left( x\right) =e^{x}, $ $ \alpha \left( x\right) =\alpha _{1}x^{a},$\ $a<1$ (entailing $0$
reflecting)$,$ $\beta \left( x\right) =\beta _{1}x^{a}$, ($b=a>a-1$).
Assumptions $1$ and $2$ are satisfied. To ensure recurrence, we assume $%
\gamma \left( x\right) =\gamma _{1}>1$ and due to this, we obtain the
expected first return time to $0$ as 
\[
u\left( 0\right) ={\bf E}\left( \tau _{0,0}\right) =\frac{1}{\alpha _{1}}%
\int_{0}^{\infty }y^{-a}e^{-\left( \gamma _{1}-1\right) y}=\frac{\Gamma
\left( 1-a\right) }{\alpha _{1}\left( \gamma _{1}-1\right) ^{1-a}}<\infty .
\]
Note that, consistently, $u\left( 0\right) $ diverges when $\gamma
_{1}\downarrow 1$\ and also when $a\uparrow 1.$ We also have
\begin{eqnarray*}
u\left( x\right)  &=&u\left( 0\right) +\frac{\gamma _{1}}{\left( \gamma
_{1}-1\right) \alpha _{1}}\int_{0}^{x}de^{\left( \gamma _{1}-1\right)
y}\int_{y}^{\infty }\frac{e^{-\left( \gamma _{1}-1\right) z}}{z^{a}}dz-\frac{%
1}{\alpha _{1}\left( 1-a\right) }x^{1-a} \\
&{\sim }&\frac{1}{\left( \gamma
_{1}-1\right) \alpha _{1}\left( 1-a\right) }x^{1-a} \; \mbox{ as } x \to \infty,
\end{eqnarray*}
where, after integration by parts, we used a large $x$ estimate of the
integral Gamma function. The large $x$ expected time to local extinction is
algebraic. An exact expression (involving the integral Gamma function) of $%
u\left( x\right) $ for all $x$ is available from the first expression of $%
u\left( x\right) $.$\newline
$
\end{example}

\section{Some Simulations}
We illustrate our results by some simulations involving a growth model with immigration. In our simulations we take $\alpha(x)=\alpha_0+\alpha_1 x^a$ and $\beta(x)=x^b $ with $\alpha_0=\alpha_1=1, $ $a=2$ and $b=\frac{3}{2}.$
In this case, the state $0$ is reflecting, and the process $x_t(x) $ reaches $ \infty $ in finite time. Assumptions \ref{ass:gammainfty} and \ref{ass:gamma0} are both satisfied.
We work in the separable case $H(x,y)=\frac{h(y)}{h(x)} .$

The following simulations are done in discrete time  by using  the embedded chain $ Z_n = X_{S_n}$ in the case where $ 0 $ is not absorbing. In this case,  we have for all  $x\geq 0, $ 
\begin{eqnarray*}
%{\bf P}\left( Z_{n}\in dy\mid Z_{n-1}=x\right)  &=&\int_{0}^{\infty }dt\beta
%\left( x_{t}\left( x\right) \right) e^{-\int_{x}^{x_{t}\left( x\right)
%}\gamma \left( z\right) dz}H\left( x_{t}\left( x\right) ,dy\right)  \\
{\bf P}\left( Z_{n}\in dy\mid Z_{n-1}=x\right) &=&e^{\Gamma \left( x\right) }\int_{x}^{\infty }dz\gamma \left( z\right)
e^{-\Gamma \left( z\right) }H\left( z,dy\right) ,
\end{eqnarray*}
translating that $Z_{n}$ is a time-homogeneous discrete-time Markov chain on 
$\left[ 0,\infty \right] $.

We also have 
\begin{multline}\label{S1}
{\bf P}\left( Z_{n}\leq y\mid Z_{n-1}=x\right) =e^{\Gamma \left( x\right)
}\int_{x}^{\infty }dz\gamma \left( z\right) e^{-\Gamma \left( z\right)
}\int_{0}^{y}H\left( z,dy^{\prime }\right)
\\
=1-e^{-\left( \Gamma \left( x\vee y\right) -\Gamma \left( x\right) \right)
}+\int_{x\vee y}^{\infty }dz\gamma \left( z\right) e^{-\left( \Gamma \left(
z\right) -\Gamma \left( x\right) \right) }H\left( z,y\right)  .
\end{multline}
Indeed, since $H\left( z,y\right) =1$\ for all $y\geq z$\ and only whenever 
$y\geq x$, the second integral in the first equation has to be cut into two
pieces corresponding to ($z>y$ and $x<z\leq y$). %Equivalently,

%\begin{eqnarray*}
%{\bf P}\left( Z_{n}>y\mid Z_{n-1}=x\right) &=&\int_{y}^{\infty }e^{\Gamma
%\left( x\right) }\int_{x}^{\infty }dz\gamma \left( z\right) e^{-\Gamma
%\left( z\right) }H\left( z,dy^{\prime }\right) \\
%&=&e^{-\left( \Gamma \left( x\vee y\right) -\Gamma \left( x\right) \right)
%}-\int_{x\vee y}^{\infty }dz\gamma \left( z\right) e^{-\left( \Gamma \left(
%z\right) -\Gamma \left( x\right) \right) }H\left( z,y\right) .
%\end{eqnarray*}
%Note that
%\begin{eqnarray*}
%{\bf E}\left( Z_{n}\mid Z_{n-1}=x\right)  &=&e^{\Gamma \left( x\right)
%}\int_{x}^{\infty }dz\gamma \left( z\right) e^{-\Gamma \left( z\right)
%}\int_{0}^{z}y^{\prime }H\left( z,dy^{\prime }\right)  \\
%&=&e^{\Gamma \left( x\right) }\int_{x}^{\infty }dz\gamma \left( z\right)
%e^{-\Gamma \left( z\right) }\left( z\left( 1-H\left( z,0\right) \right)
%-\int_{0}^{z}\overline{H}\left( z,y\right) dy\right)  \\
%&=&{ e}^{\Gamma \left( x\right) }\int_{x}^{\infty }{ dz\gamma }\left(
%z\right) { e}^{-\Gamma \left( z\right) }\left[ { z-}\int_{0}^{z}{ H}%
%\left( z,y^{\prime }\right) { dy}^{\prime }\right] ,
%\end{eqnarray*}
%such that 
%\[
%{\bf E}\left( Z_{n}\mid Z_{n-1}=x\right) -x={ e}^{\Gamma \left( x\right)
%}\int_{x}^{\infty }{ dz\gamma }\left( z\right) { e}^{-\Gamma \left(
%z\right) }\left[ \left( { z-x}\right) { -}\int_{0}^{z}{ H}\left(
%z,y^{\prime }\right) { dy}^{\prime }\right] .
%\]
%In the above equation, the first part of bracket concerns a move up, the second part a move down.\newline

To simulate the embedded chain, we have to decide first if, given $Z_{n-1}=x,$ the forthcoming move is down or up.

- A move down occurs with probability 
$
{\bf P}\left( Z_{n}\leq x\mid Z_{n-1}=x\right) =\int_{x}^{\infty }dz\gamma
\left( z\right) e^{-\left( \Gamma \left( z\right) -\Gamma \left( x\right)
\right) }H\left( z,x\right) { .}
$

- A move up occurs with complementary probability.

As soon as the type of move is fixed (down or up), to decide where the
process goes precisely, we must use the inverse of the corresponding
distribution function (\ref{S1}) (with $y\leq x$\ or $y>x$), conditioned on
the type of move.

\begin{remark}
$\left( i\right) $ If the jump kernel $H\left( z,y\right) $\ is decreasing
in $z$\ for each fixed $y$, then, from (\ref{S1}), the embedded chain is
stochastically monotone, that is, for each fixed $y$, ${\bf P}\left(
Z_{n}\leq y\mid Z_{n-1}=x\right) $\ is decreasing in $x$. Note that 
$$
{\bf P}\left( Z_{n}\in dy\mid Z_{n-1}=x\right)  =e^{\Gamma \left( x\right)
}\int_{x}^{\infty }dz\gamma \left( z\right) e^{-\Gamma \left( z\right)
}H\left( z,dy\right)  
={\bf E}H\left( G\left( x\right) ,dy\right) .
$$

$\left( ii\right) $ If state $0$\ is absorbing, equation (\ref{S1}) is valid only
when $x>0$\ and the boundary condition ${\bf P}\left( Z_{n}=0\mid
Z_{n-1}=0\right) =1$\ should be added. 
\end{remark}

The first simulation is done with the choice $h(x)=e^x.$ Here,  state $ +\infty$ is an absorbing state.
\begin{center}
\includegraphics[scale=0.3]{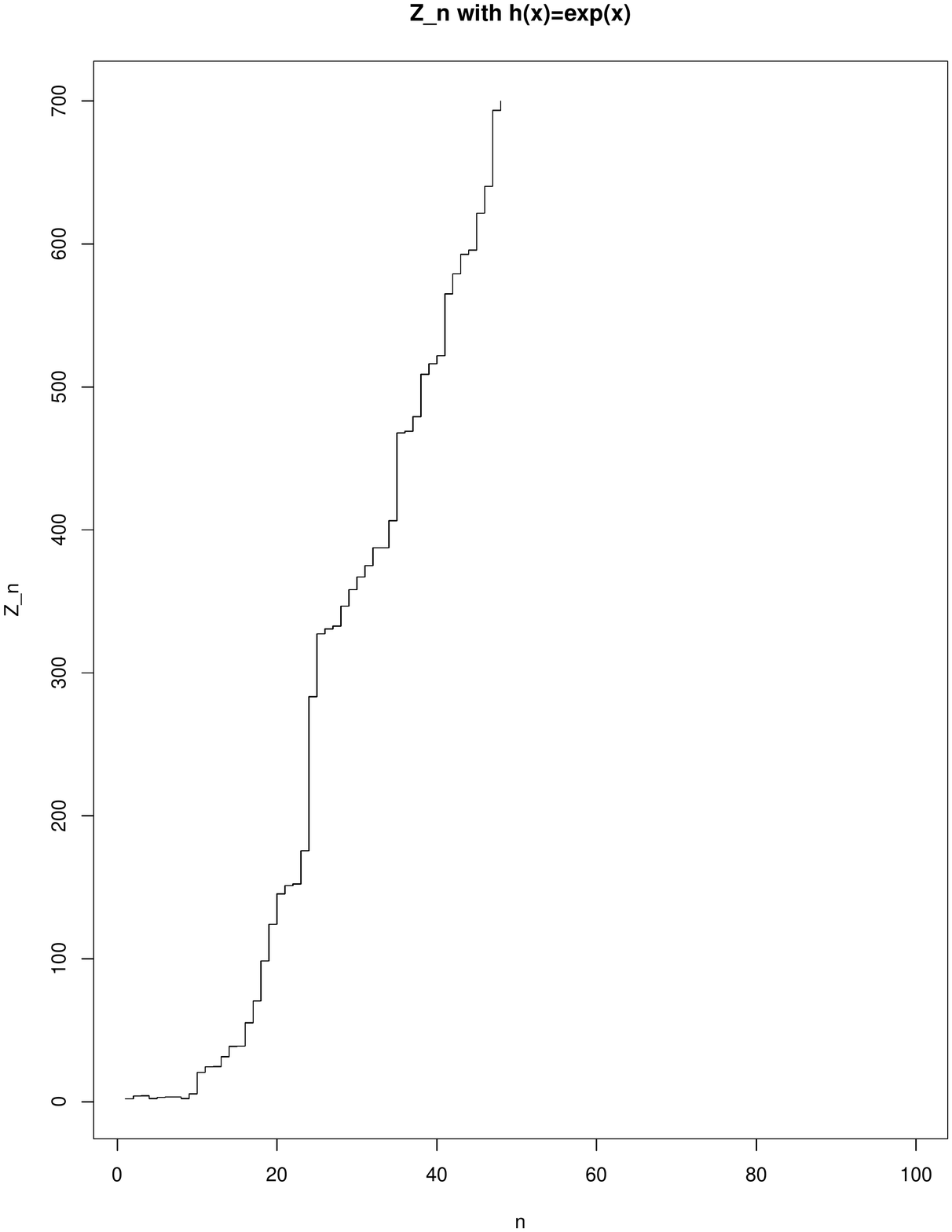}
\end{center}

We can remark the occurrence of many jumps for small values of the process and the scarcity of jumps for large values. In other words, the probability of disaster when the process is at position $x$ tends to $0$ when $x$ tends to infinity. It is decreasing in $x$, i.e. the greater $x$ is, the less is the probability of disaster at that point. In particular, $ H ( \infty , \{ \infty \} ) = 1, $ that is, state $ + \infty $ is absorbing.

By doing a simple calculation we notice that $s(\infty) < \infty $ and $I_{\infty}(x) < \infty .$ Using the last criterion in section $4.4 $ we conclude that either the process $X$ is transient (converges to $ +\infty $ as $t \to \infty $) or hits  $+\infty $ in finite time and then stays there forever.

In the next simulation we choose $h(x)=1 $  for all $ x $  (total disaster case). 
In this case $ Z_n = 0 $ for all $ n \geq 1.$ To obtain some information about the process, in this case we have simulated $ U_n = X_{S_n-} .$ Since $ s( \infty ) = \infty,  $ the process $X$ is recurrent and comes back to $ 0$ infinitely often. 
We have 
\begin{eqnarray*}
{\bf P}\left( U_{n}\in dy\mid U_{n-1}=x\right)  &=&\int_{0}^{x } H ( x, dz) \int_0^\infty dt\beta
\left( x_{t}\left( z\right) \right) e^{-\int_{z}^{x_{t}\left( z\right)
}\gamma \left( u\right) du} \delta_{ x_t ( z) } (dy )   \\
&=&\int_0^x H(x, dz) e^{\Gamma \left( z\right) }\int_{z}^{\infty }du\gamma \left( u\right)
e^{-\Gamma \left( u\right) } \delta_u( dx) .
\end{eqnarray*}
In the particular case $ h ( x) = 1 , $ that is, $ H(x, dz ) = \delta_0 ( dz) , $ this gives
\begin{equation}\label{eq:u}
 {\bf P}\left( U_{n}\in dy\mid U_{n-1}=x\right) = \gamma ( y ) e^{ - ( \Gamma ( y ) - \Gamma ( 0 ) )} dy,
\end{equation} 
that is, $ (U_n)_{n \geq 1 } $ is an i.i.d. sequence with common distribution given according to \eqref{eq:u}.
 
\begin{center}
\includegraphics[scale=0.3]{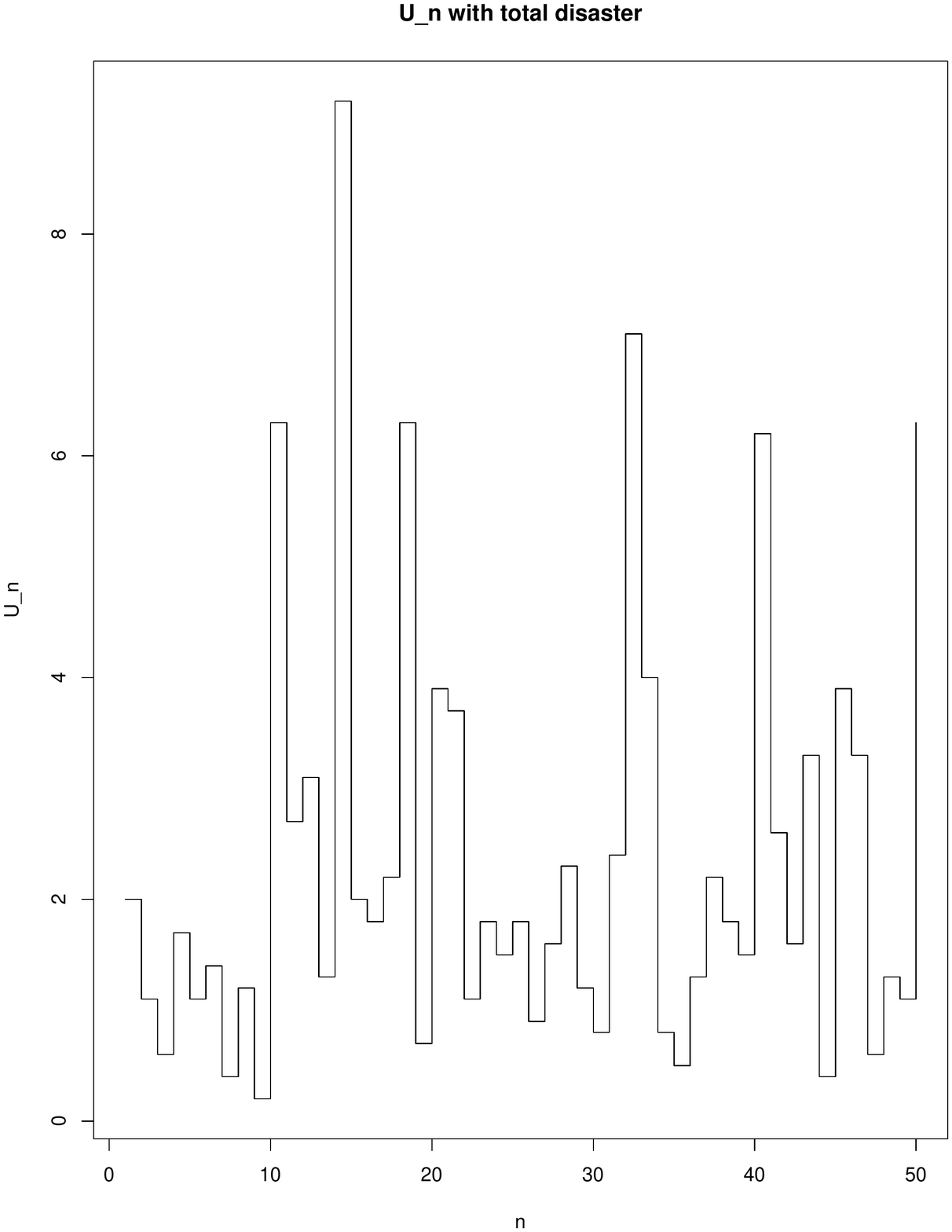}
\end{center}

\textbf{Acknowledgments:}\\
T. Huillet acknowledges partial support from the ``Chaire \textit{%
Mod\'{e}lisation math\'{e}matique et biodiversit\'{e}''.} B. Goncalves and T. Huillet 
acknowledge support from the labex MME-DII Center of Excellence (\textit{%
Mod\`{e}les math\'{e}matiques et \'{e}conomiques de la dynamique, de
l'incertitude et des interactions}, ANR-11-LABX-0023-01 project). Finally, this work was
also funded by CY Initiative of Excellence (grant ``Investissements
d'Avenir'' ANR- 16-IDEX-0008), Project EcoDep PSI-AAP 2020-0000000013$.$

\end{document}